\documentclass{amsart}
\pdfoutput=1
\usepackage[utf8]{inputenc}
\usepackage[T1]{fontenc}
\usepackage{lmodern}
\usepackage{mathtools}
\usepackage[lite]{amsrefs}
\usepackage{microtype}
\usepackage{MnSymbol}
\usepackage{enumitem} 
\setlist[enumerate,1]{label=\textup{(\roman*)}}

\usepackage[pdftitle={Local cyclic homology for nonarchimedean Banach algebras},
pdfauthor={Ralf Meyer and Devarshi Mukherjee},
pdfsubject={Mathematics}
]{hyperref}

\BibSpec{book}{%
  +{}  {\PrintPrimary}                {transition}
  +{,} { \textit}                     {title}
  +{.} { }                            {part}
  +{:} { \textit}                     {subtitle}
  +{,} { \PrintEdition}               {edition}
  +{}  { \PrintEditorsB}              {editor}
  +{,} { \PrintTranslatorsC}          {translator}
  +{,} { \PrintContributions}         {contribution}
  +{,} { }                            {series}
  +{,} { \voltext}                    {volume}
  +{,} { }                            {publisher}
  +{,} { }                            {organization}
  +{,} { }                            {address}
  +{,} { \PrintDateB}                 {date}
  +{,} { }                            {status}
  +{}  { \parenthesize}               {language}
  +{}  { \PrintTranslation}           {translation}
  +{;} { \PrintReprint}               {reprint}
  +{.} { }                            {note}
  +{.} {}                             {transition}
  +{} { \PrintDOI}                   {doi}
  +{} { available at \url}            {eprint}
  +{}  {\SentenceSpace \PrintReviews} {review}
}

\renewcommand*{\PrintDOI}[1]{\href{http://dx.doi.org/\detokenize{#1}}{doi: \detokenize{#1}}}

\theoremstyle{plain}
\newtheorem{theorem}{Theorem}[section]
\newtheorem{lemma}[theorem]{Lemma}
\newtheorem{corollary}[theorem]{Corollary}
\newtheorem{proposition}[theorem]{Proposition}
\theoremstyle{remark}
\newtheorem{remark}[theorem]{Remark}
\theoremstyle{definition}
\newtheorem{definition}[theorem]{Definition}
\newtheorem{example}[theorem]{Example}
\numberwithin{theorem}{section}

\newcommand\C{\mathbb C}
\newcommand\N{\mathbb N}
\newcommand\Q{\mathbb Q}
\newcommand\Z{\mathbb Z}

\newcommand{\coma}{\widehat}
\newcommand{\comb}{\overbracket[.7pt][1.4pt]}

\newcommand{\updagger}{\textup{\tiny\!\!\dagger}}

\newcommand{\diff}{\mathrm{d}}
\newcommand{\defeq}{\mathrel{:=}} 
\newcommand*{\into}{\rightarrowtail}
\newcommand*{\onto}{\twoheadrightarrow}

\newcommand*{\ling}[1]{#1_\mathrm{lg}}

\DeclarePairedDelimiterX{\setgiven}[2]{\{}{\}}{#1\,{:}\,\mathopen{}#2}

\newcommand\hot{\mathbin{\comb{\otimes}}}
\newcommand\haotimes{\mathbin{\coma{\otimes}}}

\DeclareMathOperator{\coker}{coker}
\DeclareMathOperator{\HA}{HA}
\DeclareMathOperator{\HAC}{\mathbb{HA}}
\DeclareMathOperator{\HLC}{\mathbb{HL}}
\DeclareMathOperator{\HP}{HP}

\newcommand{\ev}{\mathrm{ev}}
\newcommand{\nb}{\nobreakdash}

\newcommand{\dvr}{V}
\newcommand{\dvgen}{\pi}
\newcommand{\dvf}{F}
\newcommand{\resf}{\mathbb F}

\begin{document}
\title[Nonarchimedean local cyclic homology]
{Local cyclic homology for\\ nonarchimedean Banach algebras}

\author{Ralf Meyer}
\email{rmeyer2@uni-goettingen.de}

\author{Devarshi Mukherjee}
\email{devarshi.mukherjee@mathematik.uni-goettingen.de}

\address{Mathematisches Institut\\
  Universit\"at Göttingen\\
  Bunsenstra\ss{}e 3--5\\
  37073 Göttingen\\
  Germany}

\begin{abstract}
  Let~\(\dvr\) be a complete discrete valuation ring with
  uniformiser~$\dvgen$.  We introduce an invariant of Banach
  $\dvr$\nb-algebras called local cyclic homology.  This invariant
  is related to analytic cyclic homology for complete,
  bornologically torsionfree $\dvr$\nb-algebras.  It is shown that
  local cyclic homology only depends on the reduction mod~$\dvgen$
  of a Banach $\dvr$\nb-algebra and that it is homotopy invariant,
  matricially stable, and excisive.
\end{abstract}
\subjclass[2020]{19D55}

\maketitle

\section{Introduction}
\label{sec:introduction}

This article is part of a programme to define analytic cyclic
homology theories for bornological algebras over nonarchimedean
fields, with the goal of defining well behaved homology theories
for algebras over their residue fields, which may have finite
characteristic
(see~\cites{Cortinas-Cuntz-Meyer-Tamme:Nonarchimedean,
  Meyer-Mukherjee:Bornological_tf, Cortinas-Meyer-Mukherjee:NAHA,
  Meyer-Mukherjee:HA}).  In this article, we define a version of
cyclic homology that gives good results for Banach algebras over
nonarchimedean local fields such as~\(\Q_p\).  Our prototype here is
the local cyclic homology theory in the archimedean case, defined
first by Puschnigg for inductive systems of ``nice'' Fr\'echet
algebras over~\(\C\) (see \cite{Puschnigg:Diffeotopy}) and then
simplified by the first author in the setting of complete
bornological algebras over~\(\C\) (see~\cite{Meyer:HLHA}).  The
remarkable property of local cyclic homology for algebras
over~\(\C\) is that it remains well behaved for
\(\mathrm{C}^*\)\nb-algebras.  For instance, it is invariant under
homotopies that are merely continuous, and it is stable under the
\(\mathrm{C}^*\)\nb-algebraic tensor product with the compact
operators.

The definition of archimedean local cyclic homology has two key
ingredients.  The first one is analytic cyclic homology for complete
bornological algebras over~\(\C\).  The first author arrived at its
definition by rewriting Connes' entire cyclic cohomology in terms of
bornologies and taking the most natural ``predual'' of that
cohomology theory.  The second key ingredient is to turn a Banach
algebra into a bornological algebra using the bornology of
precompact subsets.  This allows to use locally defined linear maps,
which map a continuous function to a nearby smooth function.

Before we can talk about the nonarchimedean version of this, we fix
some notation.  Let~\(\dvr\) be a complete discrete valuation ring.
Let~\(\dvgen\) be its uniformiser, \(\resf\) its residue field,
and~\(\dvf\) its fraction field.  We assume throughout that~\(\dvf\)
has characteristic zero.  The nonarchimedean version of analytic
cyclic homology has already been defined
in~\cite{Cortinas-Meyer-Mukherjee:NAHA}.  It is a functor~\(\HA_*\)
from the category of complete, torsionfree bornological
$\dvr$\nb-algebras to the category of \(\dvf\)\nb-vector spaces.  We
usually work with the enriched version~\(\HAC\) of~\(\HA_*\), which
takes values in the derived category of the quasi-Abelian category
of countable projective systems of inductive systems of Banach
\(\dvf\)\nb-vector spaces; to define \(\HA_*(A)\), we first take the
homotopy limit of \(\HAC(A)\), then a colimit.  This gives a chain
complex of \(\dvf\)\nb-vector spaces, whose homology is \(\HA_*(A)\).

In the archimedean case, the functors \(\HAC\) and \(\HA_*\) are
only homotopy invariant for smooth homotopies, and they are not
expected to behave well for \(\mathrm{C}^*\)\nb-algebras.  The
nonarchimedean versions of \(\HAC\) and \(\HA_*\) are shown
in~\cite{Cortinas-Meyer-Mukherjee:NAHA} to be invariant under dagger
homotopies, which take values in the completed tensor product with
the algebra \(\dvr[t]^\updagger\).  The latter algebra is the
Monsky--Washnitzer algebra of the affine plane.  It consists of
those power series \(\sum c_n t^n\) with \(c_n \in \dvr\) where the
valuations of the coefficients~\(c_n\) grow at least linearly.  It
is unclear whether \(\HAC\) and \(\HA_*\) behave well for larger
algebras such as the \(\dvgen\)\nb-adic completion
\(\coma{\dvr[t]}\), which is defined by asking only for
\(\lim c_n = 0\).

In this article, we define a nonarchimedean analogue of the
precompact bornology, namely, the compactoid bornology.  A
subset~\(S\) of a bornological \(\dvr\)\nb-module~\(M\) is called
\emph{compactoid} if there is a bounded \(\dvr\)\nb-submodule
\(T\subseteq M\) such that, for every \(n \in \N\), there is a
finite set \(F_n \subseteq T\) with
\(S \subseteq \dvr F_n + \dvgen^n\cdot T\).  Let~\(B\) be a Banach
algebra over~\(\dvf\) with a submultiplicative norm.  Then its unit
ball \(D\subseteq B\) is an algebra over~\(\dvr\) with some nice
extra properties, which make it a dagger algebra.  For any dagger
algebra~\(D\), let~\(D'\) be~\(D\) with the compactoid bornology.
We define the local cyclic homology \(\HLC(D)\) as
\(\HAC(D')\).  We are going to prove that this theory has good
homological properties and gives good
results for many interesting Banach \(\dvr\)\nb-algebras.  
Namely, local cyclic homology is invariant under continuous
homotopy, tensoring with the \(\dvgen\)\nb-adic completion of finite
matrices, and satisfies excision for all extensions of dagger
algebras.  And we compute it for Banach algebra versions of Leavitt
path algebras, Laurent polynomials in several variables, and the
Tate algebras of curves over~\(\dvr\).  

The hardest part of the work was already done in our previous
paper~\cite{Meyer-Mukherjee:HA}, where we built an analytic cyclic
homology theory for algebras over the residue field~\(\resf\).  A
key result in~\cite{Meyer-Mukherjee:HA} says that \(\HAC(A)\) for an
algebra~\(A\) over~\(\resf\) is naturally isomorphic to \(\HAC(D)\)
if~\(D\) is a dagger algebra with \(D/\dvgen D \cong A\) and such
that the quotient bornology on \(D/\dvgen D\) is the fine one; we
briefly call~$D$ fine mod~$\dvgen$.  Roughly speaking, any dagger
algebra lifting of~\(A\) that is also fine mod~\(\dvgen\) may be
used to compute \(\HAC(A)\).  The compactoid bornology is always
fine mod~\(\dvgen\), and we prove that a dagger algebra with the
compactoid bornology remains a dagger algebra.  Therefore,
\(\HAC(A) \cong \HLC(D)\) for any dagger algebra~\(D\) with
\(D/\dvgen D \cong A\).  Here~\(D\) may be \(\dvgen\)\nb-adically
complete or, in other words, a Banach \(\dvr\)\nb-algebra.

We could also have defined the local cyclic homology for dagger
algebras by \(\HLC(D) \defeq \HAC(D/\dvgen D)\).  This is equivalent
to our definition using the precompact bornology.  This definition
looks very quick, and we use it implicitly to prove the formal
properties of local cyclic homology and compute some examples, based
on the results in~\cite{Meyer-Mukherjee:HA} showing that analytic
cyclic homology for algebras over~\(\resf\) is polynomially homotopy
invariant, stable with respect to the \(\resf\)\nb-algebra of finite
matrices, and excisive, and based on examples computed there.

While the definition of \(\HLC(D)\) as \(\HAC(D/\dvgen D)\) looks
rather quick, it just shifts all the difficulties to the analytic
cyclic homology for \(\resf\)\nb-algebras.  This is defined by
lifting an \(\resf\)\nb-algebra to a projective system of inductive
systems of \(\dvr\)\nb-algebras and then taking an analytic cyclic
homology complex for the latter object.  Results
in~\cite{Meyer-Mukherjee:HA} allow to replace the lifting that is
used to define \(\HAC(D/\dvgen D)\) by a pro-dagger algebra.  The
choices of such liftings that the general theory
in~\cite{Meyer-Mukherjee:HA} provides are, however, still rather
unwieldy.  The main observation in this article is that we may
choose the given Banach algebra~\(D\) with its precompact bornology
to compute \(\HAC(D/\dvgen D)\).  Thus our definition of \(\HLC(D)\)
turns out to be more direct.  In addition, it clarifies the
similarity with local cyclic homology for bornological
\(\C\)\nb-algebras.

We may also prove the formal properties of \(\HLC\) using that
\(\HLC(D_1) \cong \HLC(D_2)\) if \(D_1\subseteq D_2\) is a dagger
subalgebra with \(D_1/\dvgen D_1 = D_2/\dvgen D_2\).  For instance,
the homotopy invariance of \(\HLC\) reduces to the homotopy
invariance of analytic cyclic homology for dagger homotopies that is
proven in~\cite{Cortinas-Meyer-Mukherjee:NAHA}.  This proof would be
very close to the proof in~\cite{Meyer:HLHA} that local cyclic
homology for bornological \(\C\)\nb-algebras is homotopy invariant.
In the end, we switched to proofs that reduce to
\(\resf\)\nb-algebras because these are shorter than proofs that
remain within the realm of dagger algebras.

We end this article with an illuminating counterexample.  A key open
question in the study of analytic cyclic homology for dagger
algebras is when the analytic and periodic cyclic homology of a nice
dagger algebra agree.  This is important because periodic cyclic
homology is shown
in~\cites{Cortinas-Cuntz-Meyer-Tamme:Nonarchimedean} to specialise
to rigid cohomology for Monsky--Washnitzer algebras, whereas
analytic cyclic homology is shown in~\cite{Meyer-Mukherjee:HA} to
depend only on the reduction mod~\(\dvgen\).  In the archimedean
case, Khalkhali~\cite{Khalkhali:Connections_entire} proved that
entire and periodic cyclic cohomology are isomorphic for Banach
\(\C\)\nb-algebras of finite projective dimension.  This may lead to
the hope that finite projective dimension may suffice to show that
nonarchimedean analytic and periodic cyclic homology are isomorphic.
We prove that
this is not the case, by showing that the Tate algebra
\(\coma{\dvr[t]}\) with the compactoid bornology is quasi-free
--~which means of projective dimension~\(1\).  But its periodic and
analytic cyclic homology differ.

The paper is structured as follows.  We begin by recalling some
basic definitions and results about bornologies and dagger algebras
in Section~\ref{sec:prelim}.  We define the compactoid bornology and
nuclearity in Section~\ref{sec:compactoid_bornologies}.
Section~\ref{sec:inheritance} deals with inheritance properties of
the compactoid bornology.  We define local cyclic homology in
Section~\ref{sec:def_HL}.  We prove that it has the expected
homological properties in Section~\ref{sec:formal_properties}.  We
compute the local cyclic homology of some examples in
Section~\ref{sec:computations}.

This article got started by a remark by Joachim Cuntz, and we thank
him for several useful discussions.

\section{Preliminaries}
\label{sec:prelim}

Let~\(\dvr\) be a complete discrete valuation ring.  Let~\(\dvgen\)
be its uniformiser, \(\resf\) its residue field, and~\(\dvf\) its
fraction field.  We assume throughout that~\(\dvf\) has
characteristic zero.

As in~\cites{Cortinas-Cuntz-Meyer-Tamme:Nonarchimedean,
  Meyer-Mukherjee:Bornological_tf,
  Cortinas-Meyer-Mukherjee:NAHA,Meyer-Mukherjee:HA}, we use the
framework of bornologies to do nonarchimedean analysis.
A \emph{bornology} on a set~\(X\) is a collection of its subsets,
which are called \emph{bounded subsets}, such
that finite subsets are bounded and finite unions and subsets of
bounded subsets remain bounded.

A \emph{bornological \(\dvr\)\nb-module} is a
\(\dvr\)\nb-module~\(M\) with a bornology such that every bounded
subset is contained in a bounded \(\dvr\)\nb-submodule.  We call a
\(\dvr\)\nb-module map \(f \colon M \to N\) \emph{bounded} if it
maps bounded subsets of~\(M\) to bounded subsets of~\(N\).  A
bornological \(\dvr\)\nb-algebra is a bornological
\(\dvr\)\nb-module with a bounded multiplication map.  A
\emph{complete} bornological \(\dvr\)\nb-module is a bornological
\(\dvr\)\nb-module in which every bounded subset is contained in a
bounded, \(\dvgen\)\nb-adically complete \(\dvr\)\nb-submodule.
Every bornological \(\dvr\)\nb-module~$M$ has a
completion~$\comb{M}$ (see
\cite{Cortinas-Cuntz-Meyer-Tamme:Nonarchimedean}*{Proposition~2.14}).

\begin{example}
  The most basic example of a bornology on a \(\dvr\)\nb-module is
  the \emph{fine bornology}, which consists of those subsets that
  are contained in a finitely generated \(\dvr\)\nb-submodule.  Any
  fine bornological \(\dvr\)\nb-module is complete.  By default, we
  equip modules over the residue field~\(\resf\) with the fine
  bornology.
\end{example}

\begin{definition}[\cite{Meyer-Mukherjee:Bornological_tf}*{Definition~4.1}]
  \label{def:bornologically_tf}
  We call a bornological \(\dvr\)\nb-module~\(M\) (bornologically)
  \emph{torsionfree} if multiplication by~\(\dvgen\) is a
  bornological embedding, that is, $M$ is algebraically torsionfree
  and
  \(\dvgen^{-1} \cdot S \defeq \setgiven{x \in M}{\dvgen x \in S}\)
  is bounded for every bounded subset \(S \subseteq M\).  A
  \(\dvr\)\nb-module with the fine bornology is bornologically
  torsionfree if and only if it is torsionfree in the purely
  algebraic sense.  For the rest of this article, we briefly write
  ``torsionfree'' instead of ``bornologically torsionfree''.
\end{definition}

\begin{definition}[\cites{Cortinas-Cuntz-Meyer-Tamme:Nonarchimedean,
    Meyer-Mukherjee:Bornological_tf}]
  \label{def:dagger_algebra}
  We call a bornological \(\dvr\)\nb-algebra~\(D\)
  \emph{semidagger} if, for every bounded subset \(S \subseteq D\),
  the \(\dvr\)\nb-submodule \(\sum_{i=0}^\infty \dvgen^i S^{i+1}\)
  is bounded in~\(D\).  A complete, torsionfree, semidagger
  bornological \(\dvr\)\nb-algebra is called a \emph{dagger
    algebra}.
\end{definition}

\begin{example}
  \label{exa:resf_semidagger}
  Any \(\resf\)\nb-algebra with the fine bornology is semidagger and
  complete.
\end{example}

\begin{example}
  \label{exa:Banach_algebra}
  Let~\(B\) be a Banach \(\dvf\)\nb-algebra.  We assume the norm
  of~\(B\) to be submultiplicative.  Let \(D\subseteq B\) be the
  unit ball.  Then \(D\cdot D\subseteq D\), and~\(D\) becomes a
  \(\dvgen\)\nb-adically complete, torsionfree \(\dvr\)\nb-algebra.
  Conversely, if such an algebra~\(D\) is given, then
  \(D\hookrightarrow D\otimes \dvf\) and there is a unique norm on
  \(D\otimes \dvf\) with unit ball~\(D\).

  Let~\(D\) be the unit
  ball of a Banach \(\dvf\)\nb-algebra as above.  Then we call~\(D\)
  with the bornology where all subsets are bounded a \emph{Banach
    \(\dvr\)\nb-algebra}.  This bornology makes~\(D\) a dagger
  algebra.
\end{example}

\begin{definition}
  Any bornology on a \(\dvr\)\nb-algebra~\(D\) is contained in a
  smallest semidagger bornology, namely, the bornology generated by the
  \(\dvr\)\nb-submodules of the form
  \(\sum_{i=0}^\infty \dvgen^i S^{i+1}\), where \(S \subseteq D\) is
  bounded in the original bornology.  This is called the
  \emph{linear growth bornology}.  We denote~\(D\) with the linear
  growth bornology by~\(\ling{D}\).
\end{definition}

If~\(D\) is torsionfree, then the completion
\(D^\dagger \defeq \comb{\ling{D}}\) is a dagger algebra (see
\cite{Meyer-Mukherjee:Bornological_tf}*{Proposition~3.8} or, in
slightly different notation,
\cite{Cortinas-Cuntz-Meyer-Tamme:Nonarchimedean}*{Lemma 3.1.12}).

\begin{definition}
  A bornological \(\dvr\)\nb-module~\(M\) is called \emph{fine
    mod~\(\dvgen\)} if the quotient bornology on \(M/\dvgen M\) is
  the fine one.  Equivalently, any bounded subset is contained in
  \(F+\dvgen M\) for a finitely generated \(\dvr\)\nb-submodule
  \(F\subseteq M\).
\end{definition}

\section{Compactoid bornologies}
\label{sec:compactoid_bornologies}

Let~\(A\) be an algebra over the residue field and let~\(D\) be a
dagger algebra with \(D/\dvgen D \cong A\).  If, in addition, \(D\)
is fine mod~\(\dvgen\), then the main result
of~\cite{Meyer-Mukherjee:HA} gives a natural quasi-isomorphism
\(\HAC(D) \cong \HAC(A)\).  It seems likely, however, that this
fails when~\(D\) is a Banach \(\dvr\)\nb-algebra as in
Example~\ref{exa:Banach_algebra}.  We are going to introduce a
nonarchimedean analogue of the precompact bornology on a topological
\(\mathbb{C}\)\nb-vector space, which makes any dagger algebra fine
mod~\(\dvgen\).  This is the basis for our definition of local
cyclic homology, exactly as in the archimedean case
in~\cite{Meyer:HLHA}.

\begin{definition}
  \label{def:compactoid}
  Let~\(M\) be a bornological \(\dvr\)\nb-module.  A subset
  \(S\subseteq M\) is \emph{compactoid} if it is contained in a bounded 
  \(\dvr\)\nb-submodule \(T\subseteq M\) such that, for every
  \(n \in \N\), there is a finite set \(F_n \subseteq T\) with
  \(S \subseteq \dvr F_n + \dvgen^n\cdot T\).

  We call~\(M\)
  \emph{nuclear} if any bounded subset of~\(M\) is already
  compactoid.
\end{definition}

\begin{remark}
  \label{rem:Schneider-compactoid}
  The analogous concept of a compactoid \(\dvr\)\nb-submodule of a
  locally convex \(\dvf\)\nb-vector space has already been studied
  by Peter Schneider (see
  \cite{Schneider:Nonarchimedean}*{Section~12}).
\end{remark}

Compactoid subsets are bounded, and the compactoid subsets in~\(M\)
form another bornology.  This bornology is always fine
mod~\(\dvgen\).

\begin{example}
  Let~\(M\) be a Banach \(\dvr\)\nb-module as in
  Example~\ref{exa:Banach_algebra}.  Then \(S\subseteq M\) is
  compactoid if and only if for every \(n \in \N\), the image
  of~\(S\) in \(M/\dvgen^n M\) is finitely generated.  This is the
  largest torsionfree bornology on~\(M\) that is fine
  mod~\(\dvgen\).
\end{example}

\begin{example}
  \label{exa:torsion-compactoid}
  For any \(\dvr\)\nb-module~\(M\), its fine bornology is
  nuclear.
\end{example}

The following proposition characterises the compactoid bornology in
a different way, which is analogous to a very useful description of
the precompact bornology on a Fr\'echet space over~\(\C\).

\begin{proposition}
  \label{prop:equivalent_compactoid}
  Let~\(M\) be a torsionfree bornological \(\dvr\)\nb-module and
  let~\(S\) be a \(\dvr\)\nb-submodule.  The following are
  equivalent:
  \begin{enumerate}
  \item \label{char_1}%
    \(S \subseteq M\) is compactoid;
  \item \label{char_2}%
    there is a bounded
    \(\dvr\)\nb-submodule~\(T\) containing~\(S\) such that for
    each~\(n\), the \(\dvr\)\nb-submodule of \(T/\dvgen^n T\)
    generated by the image of~\(S\) is finitely generated;
  \item \label{char_4}%
    there are a bounded \(\dvr\)\nb-submodule \(T\subseteq M\)
    and a null sequence~\((t_n)\) in~\(T\) with
    \[
      S = \setgiven*{ s =\sum_{n = 0}^\infty  c_n t_n}{(c_n) \in
        \ell^\infty(\N, \dvr) \text{ and } s \text{ converges in } T};
    \]
  \item \label{char_3}%
    there are a bounded \(\dvr\)\nb-submodule \(T\subseteq M\) with
    \(S\subseteq T\) and a null sequence~\((t_k)\) in~\(T\) such
    that \(S \subseteq \sum_{k=0}^\infty \dvr t_k + \dvgen^n T\) for
    each \(n \in \N\).
  \end{enumerate}
\end{proposition}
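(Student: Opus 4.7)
The strategy is to prove $(i) \Leftrightarrow (ii)$ directly and close the cycle $(iii) \Rightarrow (iv) \Rightarrow (i)$ together with $(ii) \Rightarrow (iii)$, the last being the main step.

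The equivalence $(i) \Leftrightarrow (ii)$ is a direct unpacking of definitions: the inclusion $S \subseteq \dvr F_n + \dvgen^n T$ says exactly that the image of~$S$ in $T/\dvgen^n T$ lies in the $\dvr$-submodule generated by the image of~$F_n$; conversely, a finite generating set in $T/\dvgen^n T$ lifts to a finite subset of~$T$. For $(iii) \Rightarrow (iv)$, if $s = \sum c_n t_n$ converges in~$T$ with $(c_n) \in \ell^\infty(\N,\dvr)$ and $(t_n)$ null, then for each~$k$ there is $N_k$ with $t_n \in \dvgen^k T$ for $n \geq N_k$, so the tail lies in $\dvgen^k T$ and $s \in \sum \dvr t_n + \dvgen^k T$. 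The same truncation gives $(iv) \Rightarrow (i)$: take $F_k \defeq \{t_0, \dots, t_{N_k - 1}\}$.

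For the main step $(ii) \Rightarrow (iii)$, I would first use~$(ii)$ to pick finite $F_n \subseteq S$ lifting generators of the image of~$S$ in $T/\dvgen^n T$, arranged so that $F_n \subseteq F_{n+1}$. For each $f \in F_n$ with $n \geq 2$, use $f \in S \subseteq \dvr F_{n-1} + \dvgen^{n-1} T$ to write $f = \alpha_f + \dvgen^{n-1} r_f$ with $\alpha_f \in \dvr F_{n-1}$ and $r_f \in T$; crucially $\dvgen^{n-1} r_f = f - \alpha_f$ lies in~$S$. Concatenating $F_1$ with all the elements $\dvgen^{n-1} r_f$ for $f \in F_n$, $n \geq 2$, defines a null sequence $(t_k)$ lying in $S \cap T$. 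Unwinding the recursion shows that every $f \in F_n$ is a finite $\dvr$-linear combination of the~$t_k$, so $\dvr F_n \subseteq \sum \dvr t_k + \dvgen^n T$, which already yields $(iv)$.

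To obtain the equality in~$(iii)$, one produces for each $s \in S$ a sequence $(c_k)$ in $\dvr$ with $s = \sum c_k t_k$ converging in~$T$. Iteratively at step~$n$, expand $s \in \dvr F_n + \dvgen^n T$, substitute the fixed $(t_k)$-expansion of each $f \in F_n$, and \emph{extend} rather than revise the coefficients assigned at step $n-1$. The non-archimedean estimate then ensures that the new corrections introduced at step~$n$ have valuation at least $n-1$, so $(c_k)$ is bounded in~$\dvr$; the partial sums converge to~$s$ in~$T$ by construction. The reverse inclusion $\{\sum c_k t_k\} \subseteq S$ is ensured by taking~$T$, if necessary, to be the $\dvr$-submodule of convergent such sums inside a completion, using $(t_k) \in S$. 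The main obstacle is precisely this consistency step: to build one bounded sequence $(c_k)$ compatible with all the approximations simultaneously, the $\dvgen^{n-1}$-slack built into the $n$th batch of the null sequence is what makes the induction go through.
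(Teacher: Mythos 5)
Your overall plan and the easy implications match the paper's: \(\ref{char_1}\Leftrightarrow\ref{char_2}\) by unpacking, then truncating the null sequence to go from the existential descriptions back to finite generation. The substance is entirely in \(\ref{char_2}\Rightarrow\ref{char_4}\), and there your construction is genuinely different from the paper's and has a gap at exactly the point you flag as "the main obstacle."

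You build the null sequence out of the elements \(\dvgen^{n-1}r_f = f - \alpha_f\) obtained by re-expanding each new generator \(f\in F_n\) against \(F_{n-1}\). This does give a null sequence in \(S\) and does show \(\dvr F_n\subseteq\sum_k\dvr t_k\), hence condition \(\ref{char_3}\). But condition \(\ref{char_4}\) requires, for a \emph{fixed} \(s\in S\), a single bounded coefficient sequence \((c_k)\) that works for all \(n\) at once. Your chosen \(t_k\)'s are built only from the corrections needed to express the generators \(f\) themselves; there is no reason the correction \(s-\sum_{f\in F_{n-1}}a_f f\in S\cap\dvgen^{n-1}T\) of an arbitrary \(s\) should lie (modulo \(\dvgen^n T\)) in the span of the new \(t_k\)'s together with small multiples of the old ones. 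When you write "extend rather than revise" and invoke a "non-archimedean estimate" that the corrections at step \(n\) have valuation at least \(n-1\), you are asserting exactly the thing that needs to be proved, and the relation \(\sum_k(c_k^{(n+1)}-c_k^{(n)})t_k\in\dvgen^n T\) does not yield valuation bounds on the individual \(c_k^{(n+1)}-c_k^{(n)}\) unless the \(t_k\)'s are independent modulo \(\dvgen T\), which they need not be (some \(t_k\) already lie deep in \(\dvgen^m T\)).

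The paper avoids this by choosing the \(n\)-th batch of the null sequence more carefully: \(t_{k_n+1},\dots,t_{k_{n+1}}\) are taken to be lifts in \(S\cap\dvgen^n T\) of generators of the \emph{entire} correction module \(\bigl(\sum_{i\le k_n}\dvr\bar t_i+\mathsf{Im}_{n+1}(S)\bigr)\cap\dvgen^n T/\dvgen^{n+1}T\), which is finitely generated because \(\dvr\) is Noetherian. This choice is exactly what guarantees, for every \(s\in S\) and every choice of \(c_0,\dots,c_{k_n}\) with \(s\equiv\sum c_i t_i\bmod\dvgen^n T\), that one can \emph{extend} the coefficients without any revision: \(s-\sum_{i\le k_n}c_i t_i\) reduces modulo \(\dvgen^{n+1}T\) into the span of the new \(t_i\)'s by construction. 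That Noetherian step is the missing ingredient in your argument; without it the claimed consistency of the \((c_k^{(n)})\) across \(n\) is unsupported. (Your remark about the reverse inclusion, replacing \(T\) "by the submodule of convergent sums," is also too vague as written: shrinking \(T\) changes the \(\dvgen\)-adic topology and hence which series converge, so it does not obviously help.)
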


\begin{proof}
  The equivalence between \ref{char_1} and~\ref{char_2} is trivial.
  It is easy to prove that \ref{char_4} implies~\ref{char_3}.  It
  remains to show that \ref{char_2} implies~\ref{char_4} and that
  \ref{char_3} implies~\ref{char_2}.  Assume~\ref{char_3} and
  let~\(T\) and~\((t_k)\) be as in~\ref{char_3}.  For fixed
  \(n\in\N\), there is \(k_0\in\N\) with \(t_k \in \dvgen^n T\) for
  \(k> k_0\).  Then
  \(S \subseteq \sum_{k=0}^{k_0} \dvr t_k + \dvgen^n T\).
  Thus~\ref{char_3} implies~\ref{char_2}.  Conversely,
  assume~\ref{char_2}, and let \(S\) and~\(T\) be as
  in~\ref{char_2}.  We construct a null sequence~\((t_k)\) as
  in~\ref{char_4} inductively, by choosing an increasing sequence
  \((k_n)_{n\in\N}\) and
  \(t_{k_n+1},\dotsc, t_{k_{n+1}}\in S\cap \dvgen^n T\) for
  all~\(n\), such that if $s\in S$ is written as
  $s \equiv \sum_{i=0}^{k_n} c_i t_i \bmod \pi^n T$, then there are
  $c_i$ for $i=k_n+1,\dotsc,k_{n+1}$ with
  $s \equiv \sum_{i=0}^{k_{n+1}} c_i t_i \bmod \pi^{n+1} T$.  Since
  \(S\subseteq T\), we may start the induction with \(k_0=-1\).  Let
  \(\mathsf{Im}_n(S)\) be the image of~\(S\) in \(T/\dvgen^n T\).
  In the induction step from~\(n\) to~\(n+1\), we use that
  \(\mathsf{Im}_{n+1}(S)\) is finitely generated.  Then so is
  \(\bigl(\sum_{i=0}^{k_n} \dvr t_i + \mathsf{Im}_{n+1}(S)\bigr)
  \cap \dvgen^n T /\dvgen^{n+1} T\) because~\(\dvr\) is Noetherian.
  Let \(t_{k_n+1}, \dotsc, t_{k_{n+1}} \in \dvgen^n T\) be
  representatives of generators for this submodule.  Let $s\in S$.
  Then there are coefficients~$c_i$ for $0\le i \le k_n$ with
  $s \equiv \sum_{i=0}^{k_n} c_i t_i \bmod \pi^n T$.  Then
  $s - \sum_{i=0}^{k_n} c_i t_i \in \bigl(\sum_{i=0}^{k_n} \dvr t_i
  + \mathsf{Im}_{n+1}(S)\bigr) \cap \dvgen^n T /\dvgen^{n+1} T$ may
  be written as $\sum_{i=k_n+1}^{k_{n+1}} c_i t_i$.  Then
  $s \equiv \sum_{i=0}^{k_{n+1}} c_i t_i \bmod \pi^{n+1} T$.  The
  construction ensures that the infinite series
  $\sum_{i=0}^\infty c_i t_i$ converges $\dvgen$\nb-adically in~$T$
  towards~$s$.
\end{proof}

\begin{definition}
  Let~\(M'\) denote~\(M\) with the compactoid bornology.
\end{definition}

\begin{proposition}
  \label{pro:compactoid-nuclear}
  Let~\(M\) be a torsionfree bornological \(\dvr\)\nb-module.  Then
  the bornological \(\dvr\)\nb-module~\(M'\) is nuclear.
\end{proposition}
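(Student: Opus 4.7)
The plan is to show that every bounded subset of $M'$—that is, every compactoid subset of $M$—is again compactoid with respect to the compactoid bornology itself. Fix a compactoid subset $S\subseteq M$. Since $M$ is torsionfree, I would apply characterization \ref{char_4} of Proposition~\ref{prop:equivalent_compactoid} to get a bounded $\dvr$\nb-submodule $T\subseteq M$ and a null sequence $(t_k)$ in~$T$ such that every $s\in S$ has a convergent representation $s=\sum c_k t_k$ with $(c_k)\in\ell^\infty(\N,\dvr)$.

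The heart of the argument is to \emph{slow down} this null sequence. Since $(t_k)$ is $\dvgen$\nb-adically null in~$T$, I would choose integers $r_k\to\infty$ with $t_k\in\dvgen^{r_k} T$, but growing so slowly that the ``renormalised'' sequence $s_k\defeq \dvgen^{-r_k} t_k$ (well defined in~$T$ by torsionfreeness) is still a null sequence in~$T$. Concretely, if $K_n$ is the smallest index with $t_k\in\dvgen^n T$ for all $k\geq K_n$, I would set $r_k\defeq \lfloor n/2\rfloor$ for $K_n\leq k<K_{n+1}$; then $s_k\in\dvgen^{\lceil n/2\rceil}T$, so $(s_k)$ is still null, while $r_k\to\infty$.

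Define the bounded $\dvr$\nb-submodule
\[
T'\defeq \setgiven*{\sum_{k=0}^\infty d_k s_k}{(d_k)\in\ell^\infty(\N,\dvr),\ \text{series converges in } T}.
\]
By characterization~\ref{char_4}, applied with witness $T$ and null sequence $(s_k)$, the set $T'$ is compactoid in~$M$, so it is bounded in $M'$. Moreover $S\subseteq T'$, since the typical element $\sum c_k t_k = \sum (c_k\dvgen^{r_k}) s_k$ lies in $T'$.

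It remains to exhibit finite sets $F_n\subseteq T'$ with $S\subseteq \dvr F_n + \dvgen^n T'$, where crucially the remainder now lives in $\dvgen^n T'$, not in $\dvgen^n T$. For each $n$, pick $k_n$ with $r_k\geq n$ for all $k\geq k_n$, and set $F_n\defeq\{t_0,\dotsc,t_{k_n-1}\}\subseteq T'$. For $s=\sum c_k t_k\in S$ one has
\[
s-\sum_{k<k_n}c_k t_k \;=\; \sum_{k\geq k_n} c_k \dvgen^{r_k} s_k \;=\; \dvgen^n \sum_{k\geq k_n}\bigl(c_k \dvgen^{r_k-n}\bigr)\, s_k \;\in\; \dvgen^n T',
\]
since the coefficients $c_k\dvgen^{r_k-n}\in\dvr$ are bounded and the series converges in~$T$. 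This gives $S\subseteq \dvr F_n + \dvgen^n T'$, so $S$ is compactoid in~$M'$. The only subtle step is the slowing-down construction: one must verify that the auxiliary null sequence $(s_k)$ with $r_k\to\infty$ actually exists, which is precisely where torsionfreeness of~$M$ is used.
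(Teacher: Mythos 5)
Your proof is correct and takes essentially the same approach as the paper: both rewrite the null sequence as $t_k = \dvgen^{r_k}s_k$ with $r_k\to\infty$ and $s_k\to0$ (the ``slow‑down'' step), form the compactoid witness module generated by the slowed‑down sequence, and then show $S$ sits compactoidly inside it. The only cosmetic difference is that you verify the $\dvr F_n + \dvgen^n T'$ condition directly, whereas the paper phrases the same computation as $\dvgen$\nb-adic convergence of the series $\sum c_k t_k$ inside the witness module.
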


\begin{proof}
  Let \(S\subseteq M'\) be bounded.  That is, \(S\) is compactoid in
  the bornology of~\(M\).
  Proposition~\ref{prop:equivalent_compactoid} provides a bounded
  \(\dvr\)\nb-submodule \(T\subseteq M\) and a null
  sequence~\((x_n)\) in~\(T\) such that
  \[
    S = \setgiven*{ s =\sum_{n = 0}^\infty  c_n x_n}{(c_n) \in
      \ell^\infty(\N, \dvr) \text{ and } s \text{ converges in } T}.
  \]
  Since~\((x_n)\) is a null sequence in~\(T\), there are
  \(a_n \in \N\) and \(y_n\in T\) with \(x_n = \dvgen^{a_n} y_n\),
  \(\lim a_n = \infty\) and \(\lim y_n = 0\) in~\(T\).  The subset
  \[
    S' = \setgiven*{ s =\sum_{n=0}^\infty c_n y_n}{ (c_n) \in
      \ell^\infty(\N, \dvr), s \text{ converges in } T}
  \]
  is still compactoid in~\(T\) by
  Proposition~\ref{prop:equivalent_compactoid}.  We claim that~\(S\)
  is compactoid as a subset of~\(S'\).  For the proof, we show that any
  \(\dvgen\)\nb-adically convergent series
  \(S \ni s = \sum_{n = 0}^\infty c_n x_n\) is
  \(\dvgen\)\nb-adically convergent in~\(S'\).  That~\(s\)
  converges in~\(T\) means that for each \(e \geq 1\), there is
  an~\(n_0\) such that
  \(s - \sum_{n=0}^{n_1} c_n x_n \in \dvgen^e T\) for all
  \(n_1 \geq n_0\).  Therefore, there are \(\beta_{e, n_1} \in T \)
  with
  \[
    \dvgen^e \beta_{e, n_1}
    = s - \sum_{n=0}^{n_1} c_n x_n
    = \sum_{n = n_1 + 1}^\infty c_n x_n
    = \sum_{n= n_1 + 1}^\infty c_n \dvgen^{a_n} y_n.
  \]
  If~\(n_1\) is big enough, then $a_n \ge e$ for all $n>n_1$.
  Since~\(M\) is torsionfree, we get
  \[
    \beta_{e, n_1}
    = \sum_{n = n_1 + 1}^\infty c_n \dvgen^{a_n - e}  y_n.
  \]
  This series converges in~\(T\) because \(\beta_{f,n_2}\) exist for
  \(n_2 \gg n_1\).  Then its limit~\(\beta_{e, n_1}\) belongs
  to~\(S'\).  Therefore, the series \(\sum_{n = 0}^\infty c_n x_n\)
  converges in~\(S'\) as required.
\end{proof}

\section{Inheritance properties of the compactoid bornology}
\label{sec:inheritance}

Recall that~\(M'\) denotes~\(M\) with the compactoid bornology.
We are going to prove that~$M'$ inherits many properties from~$M$
and that the property of being nuclear is preserved by several
constructions with bornologies.  We will use many of these results
in our study of local cyclic homology.  Let~\(M\) be a bornological
\(\dvr\)\nb-module.

\begin{lemma}
  \label{lem:compactoid_complete}
  If the bornological \(\dvr\)\nb-module~\(M\) is complete, then so
  is~\(M'\).
\end{lemma}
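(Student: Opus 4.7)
The plan is to take a compactoid subset $S\subseteq M$ and exhibit a compactoid, $\dvgen$-adically complete $\dvr$-submodule of $M$ containing it. Since the $\dvr$-submodule generated by a compactoid subset is again compactoid (the inclusion $S\subseteq \dvr F_n+\dvgen^n T$ passes to submodules), I may assume $S$ itself is a $\dvr$-submodule. Characterization~(iv) of Proposition~\ref{prop:equivalent_compactoid} then furnishes a bounded $\dvr$-submodule $T\subseteq M$ and a null sequence $(t_n)$ in~$T$ such that $S$ equals the set of those series $\sum c_n t_n$, $(c_n)\in\ell^\infty(\N,\dvr)$, which converge in~$T$. Because $M$ is complete, I enlarge $T$ to a bounded, $\dvgen$-adically complete $\dvr$-submodule of $M$, in which $(t_n)$ clearly remains a null sequence.

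Next, I define $\tilde S\defeq\phi\bigl(\ell^\infty(\N,\dvr)\bigr)$, where $\phi\colon \ell^\infty(\N,\dvr)\to T$ is the $\dvr$-linear map $(c_n)\mapsto\sum c_n t_n$. The $\dvgen$-adic completeness of~$T$ together with $t_n\to 0$ ensures that $\phi$ is defined on all of $\ell^\infty(\N,\dvr)$, so $\tilde S\supseteq S$. Choosing $N$ with $t_m\in\dvgen^n T$ for $m\ge N$ yields $\tilde S\subseteq \dvr\{t_0,\dotsc,t_{N-1}\}+\dvgen^n T$, so $\tilde S$ is compactoid and hence bounded in~$M'$.

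It remains to show that $\tilde S$ is $\dvgen$-adically complete, which is the main step. The module $\ell^\infty(\N,\dvr)=\prod_{n\in\N}\dvr$ is $\dvgen$-adically complete, and $\phi$ is $\dvgen$-adically continuous because $\phi\bigl(\dvgen^m\ell^\infty(\N,\dvr)\bigr)\subseteq\dvgen^m T$. Hence $K\defeq\ker\phi$ is $\dvgen$-adically closed, and the induced $\dvr$-linear isomorphism $\ell^\infty(\N,\dvr)/K\cong\tilde S$ identifies the quotient $\dvgen$-adic topology with the intrinsic one on~$\tilde S$, since $\dvr$-linearity of $\phi$ gives $\dvgen^m\tilde S=\phi\bigl(\dvgen^m\ell^\infty(\N,\dvr)\bigr)$. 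A standard lifting argument --~given a Cauchy sequence in $\ell^\infty(\N,\dvr)/K$, pass to a telescoping subsequence and lift termwise to a summable series in $\ell^\infty(\N,\dvr)$, which converges by completeness~-- shows that the quotient of a $\dvgen$-adically complete module by a closed submodule is itself $\dvgen$-adically complete. The main delicacy is distinguishing the intrinsic $\dvgen$-adic topology on~$\tilde S$ from the subspace topology inherited from~$T$; the identity $\dvgen^m\tilde S=\phi\bigl(\dvgen^m\ell^\infty(\N,\dvr)\bigr)$ is what bridges them and lets completeness descend from $\ell^\infty(\N,\dvr)$ to $\tilde S$.
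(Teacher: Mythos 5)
Your proof is correct, but it takes a genuinely different and more elaborate route than the paper's one-paragraph argument. The paper simply chooses a bounded, $\dvgen$\nb-adically complete submodule $T$ and finite sets $F_m \subseteq T$ with $S \subseteq \dvr F_m + \dvgen^m T$, and sets $T' \defeq \bigcap_{m\in\N}(\dvr F_m + \dvgen^m T)$. This $T'$ contains~$S$, is compactoid by construction (it lies inside each $\dvr F_m + \dvgen^m T$), and is $\dvgen$\nb-adically complete because each $\dvr F_m + \dvgen^m T$ is $\dvgen$\nb-adically closed in the complete module~$T$. No auxiliary presentation by a null sequence is needed.

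Your argument instead realises the complete compactoid hull as the image $\tilde{S} = \phi\bigl(\ell^\infty(\N,\dvr)\bigr)$ of the summation map and transfers completeness through the quotient $\ell^\infty(\N,\dvr)/\ker\phi \cong \tilde{S}$. This is a valid alternative, and the point you single out --~matching the intrinsic $\dvgen$\nb-adic filtration on $\tilde{S}$ with the quotient filtration via $\dvgen^m\tilde{S} = \phi\bigl(\dvgen^m\ell^\infty(\N,\dvr)\bigr)$~-- is exactly the delicate step. One formal caveat: you invoke Proposition~\ref{prop:equivalent_compactoid}, which is stated for \emph{torsionfree} $M$, whereas Lemma~\ref{lem:compactoid_complete} makes no such assumption. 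The implication you actually use (compactoid $\Rightarrow$ null-sequence description) does not in fact require torsionfreeness, so the mathematics survives, but as written the proof quietly imports a hypothesis the lemma does not have; the paper's route sidesteps this by working directly from Definition~\ref{def:compactoid}. The trade-off is that your construction exhibits $\tilde{S}$ very concretely, while the paper's intersection $T'$ is shorter and hypothesis-free.
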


\begin{proof}
  Let~\(S\) be a compactoid subset of~\(M\).  Then there is a
  bounded \(\dvr\)\nb-submodule \(T \subseteq M\) such that for any
  \(m \in \N\), there is a finite subset \(F_m\subseteq T\) with
  \(S \subseteq \dvr F_m + \dvgen^m T\).  Since~\(M\) is complete,
  we may choose~\(T\) to be a bounded \(\dvgen\)\nb-adically
  complete \(\dvr\)\nb-submodule.  Let
  \(T' \defeq \bigcap_{m \in \N} (\dvr F_m + \dvgen^m T)\).  Then
  \(S \subseteq T'\) and~\(T'\) is \(\dvgen\)\nb-adically complete
  because it is \(\dvgen\)\nb-adically closed in~\(T\).  It is
  compactoid by construction.  Thus~\(M'\) is complete.
\end{proof}

\begin{lemma}
  \label{lem:compactoid_torsionfree}
  If the bornological \(\dvr\)\nb-module~\(M\) is torsionfree, then
  so is~\(M'\).
\end{lemma}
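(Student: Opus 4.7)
The plan is to check the two conditions in Definition~\ref{def:bornologically_tf} for~\(M'\) separately. Algebraic torsionfreeness is immediate, since~\(M\) and~\(M'\) share the same underlying \(\dvr\)\nb-module. What remains is the bornological condition: the set \(\dvgen^{-1}S \defeq \setgiven{x \in M}{\dvgen x \in S}\) should be compactoid in~\(M\) whenever \(S \subseteq M\) is compactoid.

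Starting from witnessing data for compactoidness of~\(S\) --- a bounded \(\dvr\)\nb-submodule \(T \supseteq S\) and finite subsets \(F_n \subseteq T\) with \(S \subseteq \dvr F_n + \dvgen^n T\) --- I would pass to \(T' \defeq \dvgen^{-1}T\). This is bounded in~\(M\) precisely because \(M\) is torsionfree, and it contains both~\(T\) and \(\dvgen^{-1}S\). The task then reduces to producing, for each \(n\in\N\), a finite subset \(G_n \subseteq T'\) with \(\dvgen^{-1} S \subseteq \dvr G_n + \dvgen^n T'\).

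For this I would introduce the \(\dvr\)\nb-submodule \(E_n \defeq \dvgen^{-1}(\dvr F_{n+1})\). Multiplication by~\(\dvgen\) embeds it into the finitely generated module \(\dvr F_{n+1}\); since~\(\dvr\) is Noetherian, both~\(E_n\) and \(E_n \cap T'\) are finitely generated as \(\dvr\)\nb-modules, and I take~\(G_n\) to be a finite generating set of the latter. To verify the inclusion, take \(x \in \dvgen^{-1}S\) and write \(\dvgen x = f + \dvgen^{n+1} t\) with \(f \in \dvr F_{n+1}\) and \(t \in T\). Then \(f = \dvgen x - \dvgen\cdot\dvgen^n t \in \dvgen M\), so \(f = \dvgen y\) for some \(y \in E_n\), and torsionfreeness forces \(x = y + \dvgen^n t\). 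Since \(\dvgen^n t \in T \subseteq T'\), this yields \(y = x - \dvgen^n t \in T' \cap E_n \subseteq \dvr G_n\), and hence the desired decomposition of~\(x\).

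I anticipate no serious obstacle: Noetherianity of~\(\dvr\) supplies the finiteness of the \(G_n\), and the torsionfreeness hypothesis enters at exactly the expected points --- making \(T'\) bounded, lifting \(f\) uniquely to \(y\), and cancelling \(\dvgen\) in the identity \(x = y + \dvgen^n t\).
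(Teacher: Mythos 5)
Your proof is correct and follows essentially the same route as the paper's: decompose $\dvgen x = f + \dvgen^{n+1}t$, cancel $\dvgen$ using torsionfreeness, and use Noetherianity to see that $\dvgen^{-1}(\dvr F_{n+1})$ is finitely generated. The intersection $E_n \cap T'$ you introduce is superfluous --- since $\dvr F_{n+1} \subseteq T$ already forces $E_n \subseteq T'$, you could take $G_n$ to generate $E_n$ directly, which is what the paper does.
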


\begin{proof}
  Let \(S \subseteq M\) be a compactoid subset.  Let \(T\)
  and~\(F_n\) be as in the definition of being compactoid and let
  \(n\ge1\).  Since~\(M\) is torsionfree,
  \(\dvgen^{-1} T \defeq \setgiven{x \in M}{\dvgen x \in T}\) is
  bounded.  By construction,
  \begin{equation}
    \label{eq:compactoid_torsionfree_proof}
    \dvgen^{-1}S
    \subseteq \dvgen^{-1} \dvr F_{n+1} + \dvgen^n T
    \subseteq \dvgen^{-1} \dvr F_{n+1} + \dvgen^n \dvgen^{-1} T.
  \end{equation}
  Multiplication by~\(\dvgen\) on~\(M\) is injective because~\(M\)
  is torsionfree.  It identifies \(\dvgen^{-1} \dvr F_{n+1}\) with a
  submodule of~\(\dvr F_{n+1}\).  Since~\(\dvr\) is Noetherian, it
  follows that \(\dvgen^{-1} \dvr F_{n+1}\) is again finitely
  generated.  Therefore, \eqref{eq:compactoid_torsionfree_proof}
  witnesses that~\(\dvgen^{-1} S\) is compactoid.
\end{proof}

\begin{lemma}
  \label{lem:compactoid_adjoint_functor}
  On torsionfree bornological \(\dvf\)\nb-modules, the assignment
  \(M \mapsto M'\) is the right adjoint functor of the inclusion of
  the subcategory of nuclear, torsionfree bornological
  \(\dvr\)\nb-modules.
\end{lemma}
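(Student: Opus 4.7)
The plan is to produce the counit of the adjunction as the set-theoretic identity map $M' \to M$ and then verify the universal property by a short bookkeeping argument. The counit is bounded because every compactoid subset is, by definition, bounded.

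First I would record that $M \mapsto M'$ actually lands in the target subcategory: $M'$ is torsionfree by Lemma~\ref{lem:compactoid_torsionfree} and nuclear by Proposition~\ref{pro:compactoid-nuclear}. Functoriality of the assignment reduces to the following auxiliary fact, which I would establish next: any bounded $\dvr$-linear map $f\colon N \to M$ sends compactoid subsets of~$N$ to compactoid subsets of~$M$. Indeed, if a bounded $\dvr$-submodule $T\subseteq N$ and finite sets $F_n\subseteq T$ witness that $S\subseteq N$ is compactoid, then $f(T)$ is bounded in~$M$, the sets $f(F_n)\subseteq f(T)$ are finite, and applying~$f$ to $S \subseteq \dvr F_n + \dvgen^n T$ yields $f(S) \subseteq \dvr f(F_n) + \dvgen^n f(T)$.

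With these ingredients in hand, the universal property is immediate. Given a nuclear, torsionfree bornological $\dvr$-module~$N$ and a bounded $\dvr$-linear map $f\colon N \to M$, every bounded subset of~$N$ is compactoid in~$N$ by nuclearity, so its $f$-image is compactoid in~$M$ by the auxiliary claim; this is exactly what is required for $f$ to be bounded when viewed as a map $N \to M'$. Composing with the counit $M' \to M$ recovers~$f$, and uniqueness of the factorisation is automatic because the counit is the identity on underlying $\dvr$-modules. Naturality in both variables is then clear.

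The argument is essentially formal, so I do not anticipate any real obstacle; the only substantive point is the functoriality of the compactoid property under bounded linear maps, and that is immediate from the definition.
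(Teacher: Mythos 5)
Your proof is correct and follows essentially the same route as the paper's: you establish that $M'$ is nuclear and torsionfree (via Proposition~\ref{pro:compactoid-nuclear} and Lemma~\ref{lem:compactoid_torsionfree}), and the crux is the observation that a bounded $\dvr$-linear map sends compactoid subsets to compactoid subsets by applying the map to the inclusions $S \subseteq \dvr F_n + \dvgen^n T$, which is exactly the paper's argument. The only cosmetic difference is that you phrase the conclusion in terms of the counit and its universal property, whereas the paper states the equivalent bijection on Hom-sets directly.
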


\begin{proof}
  We assume~\(N\) to be torsionfree to ensure that~\(N'\) is nuclear
  by Proposition~\ref{pro:compactoid-nuclear}.
  Lemma~\ref{lem:compactoid_torsionfree} shows that~\(N'\) is again
  torsionfree.
  Let \(M\) and~\(N\) be torsionfree bornological
  \(\dvr\)\nb-modules.  Assume that the bornology on~\(M\) is
  nuclear.  We claim that a \(\dvr\)\nb-linear map
  \(\varphi \colon M \to N\) is bounded if and only if it is bounded
  as a map to~\(N'\).  One implication is trivial because compactoid
  subsets are bounded.  Let \(S \subseteq M\) be a bounded subset.
  By assumption, \(S\) is compactoid.  That is, there is a bounded
  \(\dvr\)\nb-submodule \(T \subseteq M\) containing~\(S\), such
  that for each~\(n\), there is a finite subset \(F_n\subseteq T\)
  with \(S \subseteq \dvr F_n + \dvgen^n T\).  The inclusions
  \(\varphi(S) \subseteq \dvr \varphi(F_n) + \dvgen^n \varphi(T)\)
  witness that \(\varphi(S)\subseteq N\) is compactoid.  That is,
  \(\varphi\) is bounded as a map \(M \to N'\).
\end{proof}

\begin{lemma}
  \label{lem:completion_compactoid}
  If~\(M\) is a nuclear bornological \(\dvr\)\nb-module, then
  so is~\(\comb{M}\).
\end{lemma}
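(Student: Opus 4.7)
The plan is to transport the compactoid structure of a bounded submodule of $M$ through the completion construction. Recall from \cite{Cortinas-Cuntz-Meyer-Tamme:Nonarchimedean}*{Proposition~2.14} that every bounded subset of $\comb{M}$ lies inside the image of the $\dvgen$-adic completion $\coma{T}$ of some bounded $\dvr$-submodule $T \subseteq M$. So it suffices to fix such~$T$ and show that the image of $\coma{T}$ in $\comb{M}$ is compactoid.

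Since $M$ is nuclear, the bounded submodule $T$ is compactoid in~$M$: I can choose a bounded $\dvr$-submodule $T' \subseteq M$ containing~$T$ and finite subsets $F_n \subseteq T'$ with $T \subseteq \dvr F_n + \dvgen^n T'$ for every $n \in \N$. The image of $\coma{T'}$ in $\comb{M}$ is bounded; the plan is to use it as the witnessing submodule and to use the same finite sets $F_n$ as the approximating finite sets.

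The main step is to verify the inclusion $\coma{T} \subseteq \dvr F_n + \dvgen^n \coma{T'}$ inside $\comb{M}$, for each $n$. For any $x$ in the image of $\coma{T}$, I would express $x$ as a $\dvgen$-adic limit $x = \lim_k x_k$ with $x_k \in T$. For $k$ sufficiently large, $x - x_k \in \dvgen^n \coma{T'}$, while $x_k \in \dvr F_n + \dvgen^n T' \subseteq \dvr F_n + \dvgen^n \coma{T'}$; adding these gives $x \in \dvr F_n + \dvgen^n \coma{T'}$, as required. This witnesses that the image of $\coma{T}$ is compactoid in $\comb{M}$, hence so is the given bounded subset.

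The main obstacle, which is really only a bookkeeping issue, is to make sure the manipulations inside $\comb{M}$ are legitimate: that bounded subsets of $\comb{M}$ are indeed described by images of $\dvgen$-adic completions of bounded submodules of $M$, and that an element of $\coma{T}$ genuinely is a $\dvgen$-adic limit of elements of~$T$ inside $\coma{T'}$. Both are built into the construction of the bornological completion, so no new ideas are needed beyond unpacking the definitions and carrying out the closure calculation above.
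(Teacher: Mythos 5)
Your proof is correct and follows essentially the same route as the paper's: use the inductive limit description of $\comb{M}$ to reduce to the image of $\coma{T}$, then transport the compactoid witness $T' \supseteq T$ from $M$ to $\comb{M}$. The paper phrases the final step more compactly by noting that the image of $U$ in $U'/\dvgen^k U'$ agrees with the image of $\coma{U}$ in $\coma{U'}/\dvgen^k\coma{U'}$, whereas you spell out the same containment via an explicit $\dvgen$-adic limit argument; the content is identical.
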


\begin{proof}
  Let \(S \subseteq \comb{M}\) be a bounded subset.  The inductive
  limit description of~$\comb{M}$ shows that~$S$ is contained in the
  image of the \(\dvgen\)\nb-adic completion~\(\coma{U}\) of some
  bounded submodule \(U \subseteq M\).  Since~\(M\) is compactoid,
  there is a bounded submodule \(U' \subseteq M\) such that the
  image of~$U$ in $U'/\dvgen^k U'$ is finitely generated for each
  $k\in\N$.  This is equal to the image of~\(\coma{U}\) in
  $\coma{U'}/\dvgen^k \coma{U'} \cong U'/\dvgen^k U'$.  This
  witnesses that~$S$ is compactoid.
\end{proof}

We now investigate the behaviour of the compactoid bornology under
tensor products and extensions.

\begin{proposition}
  \label{prop:compactoid_tensor-product}
  Let \(M\) and~\(N\) be complete, torsionfree bornological
  \(\dvr\)-modules.  Then there is an isomorphism of complete
  bornological \(\dvr\)-modules
  \[
    M' \hot N' \cong (M \hot N)'.
  \]
  If $M$ and~$N$ are nuclear, then so is $M\hot N$.
\end{proposition}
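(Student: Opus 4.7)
The plan is to identify $M'\hot N'$ and $(M\hot N)'$ as the same complete bornological $\dvr$-module by showing that the two bornologies on the same underlying module coincide. One direction is computational: given compactoid $U\subseteq M$ and $V\subseteq N$, pick bounded $\dvgen$-adically complete $T_1\supseteq U$, $T_2\supseteq V$ together with finite sets $F_n\subseteq T_1$, $G_n\subseteq T_2$ as in Definition~\ref{def:compactoid}. The chain of inclusions
\[
  U\otimes V \subseteq (\dvr F_n + \dvgen^n T_1)\otimes (\dvr G_n + \dvgen^n T_2) \subseteq \dvr(F_n\otimes G_n) + \dvgen^n (T_1\otimes T_2)
\]
exhibits $U\otimes V$ as compactoid in $M\otimes N$ with witness $T_1\otimes T_2$. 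Passing to $\dvgen$-adic completions shows that $\coma{U\otimes V}$ is compactoid in $M\hot N$, giving a bounded map $M'\hot N' \to (M\hot N)'$.

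The main obstacle is the reverse direction: every compactoid subset of $M\hot N$ must be bounded in $M'\hot N'$. Let $S$ be compactoid in $M\hot N$. By part~\ref{char_4} of Proposition~\ref{prop:equivalent_compactoid}, there is a bounded $T\subseteq M\hot N$ and a null sequence $(w_k)$ in $T$ with $S = \setgiven{\sum c_k w_k}{(c_k)\in\ell^\infty(\N,\dvr)}$. Since $T$ is bounded in $M\hot N$, it lies in the image of $\coma{T_1\otimes T_2}$ for some bounded $\dvgen$-adically complete $T_1\subseteq M$, $T_2\subseteq N$; lift the $w_k$ along this surjection. As $w_k\to 0$ $\dvgen$-adically, we may arrange the lifts as $w_k = \dvgen^{m_k} u_k$ with $u_k\in\coma{T_1\otimes T_2}$ and $m_k\to\infty$, and expand each $u_k$ inductively as a $\dvgen$-adically convergent sum of elementary tensors in $T_1\otimes T_2$, giving
\[
  w_k = \sum_{\ell\ge 0}\sum_{j=1}^{N_{k,\ell}} \dvgen^{m_k+\ell}\, x_{k,j,\ell}\otimes y_{k,j,\ell}.
\]
Redistribute each power $\dvgen^{m_k+\ell}$ symmetrically onto the two factors and collect the countable families $\{\dvgen^{\lfloor(m_k+\ell)/2\rfloor}x_{k,j,\ell}\}\subseteq T_1$ and $\{\dvgen^{\lceil(m_k+\ell)/2\rceil}y_{k,j,\ell}\}\subseteq T_2$. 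Since $m_k+\ell\to\infty$, these are null sequences, so their $\ell^\infty$-hulls $U\subseteq M$ and $V\subseteq N$ are compactoid by Proposition~\ref{prop:equivalent_compactoid}; by construction $S$ is contained in the image of $\coma{U\otimes V}$ in $M\hot N$, hence $S$ is bounded in $M'\hot N'$. The technical core here is lifting $(w_k)$ faithfully from $M\hot N$ back into $\coma{T_1\otimes T_2}$ so that the redistribution makes sense.

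Once the isomorphism is established, the second assertion is immediate: if $M$ and $N$ are nuclear then $M = M'$ and $N = N'$ as bornological modules, so $M\hot N \cong M'\hot N' \cong (M\hot N)'$, which is exactly the statement that every bounded subset of $M\hot N$ is compactoid, i.e., $M\hot N$ is nuclear.
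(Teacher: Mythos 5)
Your overall strategy tracks the paper's closely: the forward inclusion is the paper's argument with the witnesses made explicit, the nuclearity statement at the end is identical, and the reverse inclusion follows the same plan of reducing a compactoid set of $M\hot N$ to a null sequence, expanding into elementary tensors inside a single bounded piece, and redistributing powers of~$\dvgen$ onto the two factors to manufacture compactoid submodules $U\subseteq M$, $V\subseteq N$. The paper does the expansion step differently: it applies Proposition~\ref{prop:equivalent_compactoid} again to get a null sequence $(x_k)$ directly in $S'\haotimes T'$ and then observes that the construction in the proof of that proposition lets one arrange $x_k\in\dvgen^{2l_k}\,S'\otimes T'$, i.e.\ as \emph{finite} sums of elementary tensors, before any infinite series is needed. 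Your route of lifting the null sequence to $\coma{T_1\otimes T_2}$ and unfolding each lift as a $\dvgen$\nb-adic series of elementary tensors is a workable substitute for that step.

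There is, however, a real gap in the redistribution. You split $\dvgen^{m_k+\ell}$ as $\dvgen^{\lfloor(m_k+\ell)/2\rfloor}\cdot\dvgen^{\lceil(m_k+\ell)/2\rceil}$ and absorb both halves into the generators of $U$ and $V$, leaving nothing outside. The resulting elementary tensor $\bigl(\dvgen^{\lfloor(m_k+\ell)/2\rfloor}x_{k,j,\ell}\bigr)\otimes\bigl(\dvgen^{\lceil(m_k+\ell)/2\rceil}y_{k,j,\ell}\bigr)$ is then just an unweighted generator of $U\otimes V$: since $U$ and $V$ are by definition spanned over~$\dvr$ by exactly these elements, there is no way to divide a power of~$\dvgen$ back out of such a tensor inside $U\otimes V$. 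Consequently it is not clear that the inner series tends to zero $\dvgen$\nb-adically in $U\otimes V$ (so $w_k$ need not land in the image of $\coma{U\otimes V}$), nor that the outer series $\sum c_k w_k$ converges there. The remark at the end of your proof identifies the wrong technical bottleneck -- lifting from $M\hot N$ to $\coma{T_1\otimes T_2}$ is routine; it is the $\dvgen$\nb-budget in the redistribution that needs care. The fix, which is what the paper's normalisation $x_k\in\dvgen^{2l_k}\,S'\otimes T'$ followed by putting only $\dvgen^{l_k}$ on each factor is really doing, is to split the exponent into three parts all tending to infinity, say $m_k+\ell = p_{k,\ell}+q_{k,\ell}+r_{k,\ell}$, take $U$, $V$ to be the $\ell^\infty$\nb-hulls of $\{\dvgen^{p_{k,\ell}}x_{k,j,\ell}\}$ and $\{\dvgen^{q_{k,\ell}}y_{k,j,\ell}\}$, and keep $\dvgen^{r_{k,\ell}}$ as a scalar in front. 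Then each term of the expansion lies in $\dvgen^{r_{k,\ell}}(U\otimes V)$ with $r_{k,\ell}\to\infty$, which is exactly what is needed to make both series converge in $\coma{U\otimes V}$.
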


\begin{proof}
  Lemma~\ref{lem:compactoid_complete} shows that \(M'\) and~\(N'\)
  are complete.  A submodule in \(M' \hot N'\) is bounded if it is
  in the image of \(S \haotimes T\), where \(S\) and~\(T\) are
  \(\dvgen\)\nb-adically complete, compactoid submodules of \(M\)
  and~\(N\).  By definition, this means that there are bounded,
  \(\dvgen\)\nb-adically complete \(\dvr\)\nb-submodules \(S'\)
  and~\(T'\) of \(M\) and~\(N\) such that the images of~\(S\)
  in~\(S' \onto S'/\dvgen^n S'\) and of~\(T\) in
  \(T' \onto T'/\dvgen^n T'\) are finitely generated for each~\(n\).
  Then the image of \(S \haotimes T\) in
  \(S' \haotimes T' /\dvgen^n (S' \haotimes T')\) is finitely
  generated for each~\(n\).  This says that \(S \haotimes T\) is
  compactoid in \(M\otimes N\).

  Conversely, let \(X\subseteq M \hot N\) be a compactoid subset of
  \(M \hot N\).  Then there is a bounded \(\dvgen\)-adically
  complete submodule \(U \subseteq M \hot N\), such that the image
  of~\(X\) under the quotient map to \(U/\dvgen^n U\) is finitely
  generated for each~\(n\).  There are bounded
  \(\dvgen\)\nb-adically complete submodules \(S'\subseteq M\) and
  \(T'\subseteq N\) such that~\(U\) is contained in the image
  of~\(S'\haotimes T'\) in \(M\hot N\).  We have
  \(S'\haotimes T'/ \dvgen^n (S'\haotimes T') \cong (S'\otimes
  T')/\dvgen^n (S'\otimes T')\), and the image of~\(X\) remains
  finitely generated in \((S'\otimes T')/\dvgen^n (S'\otimes T')\)
  for all \(n\in\N\).  By
  Proposition~\ref{prop:equivalent_compactoid}, there is a null
  sequence \((x_k)_{k \in \N}\) in \(S' \haotimes T'\) such
  that~\(X\) is contained in the image of
  \(\sum_{k=0}^\infty \dvr x_k + \dvgen^n (S' \haotimes T')\) in
  \(M\hot N\).  Since \(\lim x_k = 0\), there are \(l_k \to \infty\)
  with \(x_k \in \dvgen^{2 l_k} S' \haotimes T'\).  Since
  \(S'\haotimes T'/ \dvgen^n (S'\haotimes T') \cong (S'\otimes
  T')/\dvgen^n (S'\otimes T')\), the proof of
  Proposition~\ref{prop:equivalent_compactoid} shows that we may
  arrange \(x_k \in \dvgen^{2 l_k} S' \otimes T'\).  Then
  \(x_k = \dvgen^{2 l_k} \sum_{i = 1}^{m_i} a_i \otimes b_i\) for
  some \(m_i \in \N\), \(a_i \in S'\) and \(b_i \in T'\) for
  all~\(i\).  Let \(S\) and~\(T\) be the \(\dvgen\)\nb-adically
  closed \(\dvr\)\nb-submodules in \(S'\) and~\(T'\)
  generated by the bounded subsets \(\{\dvgen^{l_k} a_i\}\) and
  \(\{\dvgen^{l_k} b_i\}\), respectively.  By
  Proposition~\ref{prop:equivalent_compactoid}, \(S\) and~\(T\) are
  compactoid submodules of \(S'\) and~\(T'\), and
  \(X \subseteq S \haotimes T\).  That is, \(X\) is bounded in
  \(M' \hot N'\).

  By definition, a bornological $\dvr$\nb-module~$M$ is nuclear if
  and only if the canonical map $M'\to M$ is a bornological isomorphism.
  If $M'=M$ and $N'=N$, then $(M\hot N)' = M'\hot N' = M \hot N$ as
  well.
\end{proof}

\begin{proposition}
  \label{prop:exactness}
  Let \(K \into L \onto P\) be an extension of complete, torsionfree
  bornological \(\dvr\)\nb-modules and let~\(P\) be algebraically
  torsionfree.  Then \(K' \into L' \onto P'\) is an extension of
  complete bornological \(\dvr\)\nb-modules as well.  In addition,
  $L$ is nuclear if and only if both $K$ and~$P$ are nuclear.
\end{proposition}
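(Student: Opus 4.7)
My plan is to reduce the statement to two bornological lemmas — that $K' \into L'$ is a bornological embedding and that $L' \onto P'$ is a bornological quotient — and then bootstrap from these the nuclearity equivalence. Completeness of $K', L', P'$ already follows from Lemma~\ref{lem:compactoid_complete}; together with the embedding and quotient properties, this forces the required kernel--cokernel structure because passage to the compactoid bornology does not change underlying modules. I will write $q\colon L \onto P$ for the quotient map, since the letter~$\pi$ is reserved for the uniformiser.

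For the embedding step, I would take a compactoid $\dvr$-submodule $S \subseteq K \subseteq L$ and use characterisation~\ref{char_2} of Proposition~\ref{prop:equivalent_compactoid} to pick a bounded $T_L \subseteq L$ with $S \subseteq T_L$ whose image of~$S$ in $T_L/\dvgen^n T_L$ is generated by classes of finitely many $g_1,\dots,g_m \in S$. Using the bornological embedding $K \hookrightarrow L$, I would also pick a bounded $T_K \subseteq K$ containing~$S$. Each $s \in S$ then reads $s = \sum c_i g_i + \dvgen^n t$ with $t \in T_L + T_K$; because $s$ and the~$g_i$ lie in~$K$, $\dvgen^n t \in K$, and algebraic torsionfreeness of $P = L/K$ forces $t \in K$. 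The key computation is $K \cap (T_L + T_K) = T_K + (K \cap T_L)$, which places $t$ inside the bounded submodule $T_K' \defeq T_K + (K \cap T_L) \subseteq K$ (bounded by the subspace bornology). Then the classes of $g_1,\dots,g_m$ in $T_K'/\dvgen^n T_K'$ generate the image of~$S$, so~$S$ is compactoid in~$K$.

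For the quotient step, I would take a compactoid $S' \subseteq P$, apply characterisation~\ref{char_4} to write $S' = \setgiven*{\sum c_n t_n'}{(c_n) \in \ell^\infty(\N,\dvr)}$ for a null sequence $(t_n')$ in some bounded, $\dvgen$-adically complete $T' \subseteq P$, and lift. Using $t_n' = \dvgen^{a_n} \bar u_n$ with $a_n \to \infty$, I would lift all~$\bar u_n$ into a single bounded submodule of~$L$ via the quotient bornology and set $s_n \defeq \dvgen^{a_n} u_n$; then $(s_n)$ is null in a bounded, $\dvgen$-adically complete $T \subseteq L$ (possibly after enlarging to exploit completeness of~$L$) and $q(s_n) = t_n'$. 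The set $S \defeq \setgiven*{\sum c_n s_n}{(c_n) \in \ell^\infty(\N,\dvr)} \subseteq T$ is compactoid in~$L$ by~\ref{char_4}, and $q(S) = S'$ because~$q$ commutes with the defining $\dvgen$-adic limits.

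The nuclearity direction $L$ nuclear $\Rightarrow$ $K, P$ nuclear is then immediate from the two properties: a bounded subset of~$K$ (resp.~$P$) is bounded in~$L$ (resp.\ is the image of one), hence compactoid in~$L$, hence compactoid in~$K$ by the embedding (resp.\ in~$P$ because~$q$ preserves compactoids). For the converse, taking $S \subseteq L$ bounded inside bounded $T_L$, nuclearity of~$P$ would give bounded $T_L' \subseteq L$ and finite $F_n \subseteq T_L'$ with $q(S) \subseteq \dvr q(F_n) + \dvgen^n q(T_L')$, so each $s \in S$ writes as $s = \sum c_i f_i + \dvgen^n t + k$ with $t \in T_L'$ and $k \in K \cap (T_L + T_L')$; then nuclearity of~$K$ yields a bounded $U_K \subseteq K$ and finite $G_n \subseteq K \cap (T_L + T_L')$ with $k \in \dvr G_n + \dvgen^n U_K$, and $T \defeq T_L + T_L' + U_K$ together with $F_n \cup G_n$ witness that~$S$ is compactoid in~$L$. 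I expect the embedding step to be the main obstacle: the algebraic torsionfreeness of~$P$ is essential there to pull~$t$ back into~$K$, and the identity $K \cap (T_L + T_K) = T_K + (K \cap T_L)$ is what makes the rewitnessing inside~$K$ possible.
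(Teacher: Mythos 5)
Your proof is correct, and for the embedding and quotient steps it follows essentially the same path as the paper: algebraic torsionfreeness of $P$ is used to pull the $\dvgen^n$\nb-error term back into $K$, and a compactoid subset of $P$ is lifted by lifting a null sequence from a bounded complete submodule of $P$ to one in $L$. The one genuine divergence is in the nuclearity equivalence. You prove the direction ``$K$ and $P$ nuclear $\Rightarrow$ $L$ nuclear'' by an explicit rewitnessing argument, splitting $s = \sum c_i f_i + \dvgen^n t + k$ and absorbing the error $k$ via nuclearity of $K$ applied to the bounded set $K \cap (T_L + T_L')$. The paper instead invokes the Five Lemma in the quasi-Abelian category of bornological $\dvr$\nb-modules, applied to the morphism of extensions $(K' \into L' \onto P') \to (K \into L \onto P)$: once both rows are known to be exact, $L' \to L$ is an isomorphism if $K' \to K$ and $P' \to P$ are. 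Your version is more elementary and self-contained; the paper's is shorter but depends on the quasi-Abelian formalism already in place.

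One minor inefficiency in your embedding step: the auxiliary module $T_K$ and the identity $K \cap (T_L + T_K) = T_K + (K \cap T_L)$ are superfluous. Characterisation~\ref{char_2} already places the error term $t$ in $T_L$, not merely in $T_L + T_K$, so torsionfreeness of $P$ lands $t$ in $K \cap T_L$, which is bounded in $K$ because $K \into L$ is a bornological embedding. The paper phrases exactly this shortcut as injectivity of $(T_L\cap K)/\dvgen^n(T_L\cap K) \to T_L/\dvgen^n T_L$, so that the image of $S$ in the source is isomorphic to, hence as finitely generated as, its image in the target.
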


\begin{proof}
  The functoriality of the compactoid bornology
  (Lemma~\ref{lem:compactoid_adjoint_functor}) shows that a subset
  \(S\subseteq K\) that is compactoid in~\(K\) remains compactoid
  in~\(L\).  Conversely, we claim that if \(S \subseteq K\) is
  compactoid as a subset of~\(L\), it is compactoid as a subset
  of~\(K\).  Let \(T\supseteq S\) be a bounded subset of~\(L\) such
  that the image of~\(S\) in \(T/\dvgen^n T\) is finitely generated
  for all \(n\in\N\).  Since \(S\subseteq K\), it follows that
  \(S\subseteq T\cap K\).  Since~\(P\) is torsionfree,
  \(\dvgen^n x\in K\) for \(x\in L\) can only happen if \(x\in K\).
  This says that the canonical map
  \((T\cap K)/ \dvgen^n (T\cap K) \to T/\dvgen^n T\) is injective.
  Therefore, the image of~\(S\) in \((T\cap K)/ \dvgen^n (T\cap K)\)
  is finitely generated for all \(n\in\N\).  This finishes the proof
  of the claim that the inclusion
  \(K' \to L'\) is a bornological embedding.

  The quotient map clearly remains bounded as a map
  \(q\colon L' \to P'\).  It remains to prove that it is a
  bornological quotient map, that is, any compactoid subset of~\(P\)
  is the image of a compactoid subset of~\(L\).

  Let \(S\subseteq P\) be compactoid.  Since~$P$ is complete,
  Proposition~\ref{prop:equivalent_compactoid} implies that there is
  a bounded, \(\dvgen\)\nb-adically complete \(\dvr\)\nb-submodule
  \(T \subseteq P\) containing~\(S\) and a null sequence~\((x_n)\)
  in~$T$ such that
  \(S \subseteq \setgiven{s = \sum_{n \in \N} c_n x_n}{(c_n) \in
    \ell^\infty(\N, \dvr)}\); these sequences converge automatically
  because \(\lim x_n=0\) and~\(T\) is complete.  Since~\(L\) is
  complete, there is a bounded, \(\dvgen\)\nb-adically complete
  \(\dvr\)\nb-submodule \(W\subseteq L\) that lifts~$T$.  We may
  lift~\((x_n)\) to a null sequence~\((\tilde{x}_n)\) in~\(W\).
  Then
  \(\tilde{S} = \setgiven{\sum_{n=0}^\infty c_n
    \tilde{x}_n}{(c_n) \in \ell^\infty(\N, \dvr)}\) is a compactoid
  submodule of~\(L\) that lifts~\(S\).

  By definition, a bornological $\dvr$\nb-module~$M$ is nuclear if
  and only if the canonical map $M'\to M$ is a bornological isomorphism.
  The category of bornological $\dvr$\nb-modules is quasi-Abelian.
  This implies a version of the Five Lemma.  For our two extensions
  \(K' \into L' \onto P'\) and \(K \into L \onto P\), it says that
  the map $L' \to L$ is a bornological isomorphism if the maps
  $K' \to K$ and $P' \to P$ are bornological isomorphisms.
  Conversely, it is easy to see that subspaces and quotients inherit
  nuclearity.
\end{proof}

\begin{lemma}
  \label{lem:compactoid_semidagger}
  Let~\(D\) be a bornological algebra and let~\(D'\) be~\(D\) with
  the compactoid bornology.  This is again a bornological algebra.
  If~\(D\) is semidagger, then so is~\(D'\).
\end{lemma}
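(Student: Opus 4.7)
The plan is to verify the two claims separately by directly manipulating the compactoid witnesses from Definition~\ref{def:compactoid}, using the bornological algebra structure of $D$ for the product claim and the dagger closure $T^*\defeq \sum_{i\ge 0}\dvgen^i T^{i+1}$ for the semidagger claim.

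For the product claim, let $S_1,S_2\subseteq D$ be compactoid, with bounded $\dvr$-submodules $T_1,T_2$ and finite sets $F_n^{(j)}\subseteq T_j$ satisfying $S_j\subseteq \dvr F_n^{(j)}+\dvgen^n T_j$. Since $D$ is a bornological algebra, $T_1T_2$ is bounded and so is contained in some bounded $\dvr$-submodule~$T_3$. Expanding
\[
  (\dvr F_n^{(1)}+\dvgen^n T_1)(\dvr F_n^{(2)}+\dvgen^n T_2),
\]
the pure term lies in $\dvr(F_n^{(1)}F_n^{(2)})$, and each of the three cross-terms has a factor $\dvgen^n T_j$ and (using $F_n^{(j)}\subseteq T_j$) lies in $\dvgen^n T_1T_2\subseteq \dvgen^n T_3$. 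Since $F_n^{(1)}F_n^{(2)}$ is finite, $S_1S_2$ is compactoid; hence multiplication is bounded on $D'$, which is therefore a bornological $\dvr$-algebra.

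For the semidagger claim, let $S\subseteq D$ be compactoid with witness $(T,\{F_n\})$. The semidagger property of~$D$ applied to~$T$ shows that $T^*$ is a bounded $\dvr$-submodule of~$D$; a direct computation on its defining sum yields the ``almost closed under multiplication'' identity $\dvgen (T^*)^2\subseteq T^*$, and induction gives $\dvgen^p (T^*)^{p+1}\subseteq T^*$ for every $p\ge 0$. Because $T\subseteq T^*$, the pair $(T^*,\{F_n\})$ is still a compactoid witness for~$S$. Expanding $S^{i+1}\subseteq \dvr F_n^{i+1}+\dvgen^n (T^*)^{i+1}$ and using the key identity to get $\dvgen^{n+i}(T^*)^{i+1}\subseteq \dvgen^n T^*$, we obtain
\[
  \sum_{i=0}^\infty \dvgen^i S^{i+1}\subseteq \sum_{i=0}^\infty \dvgen^i \dvr F_n^{i+1}+\dvgen^n T^*.
\]
One then splits the first sum at $i=n$: the initial segment $\bigcup_{i<n}\dvgen^i F_n^{i+1}$ is a finite set $G_n\subseteq T^*$, while the tail $i\ge n$ is absorbed into $\dvgen^n$ times a bounded set by decomposing $(T^*)^{i+1}=(T^*)^{i-n+1}\cdot(T^*)^n$ and applying $\dvgen^{i-n}(T^*)^{i-n+1}\subseteq T^*$; taking a slightly larger witness $\tilde T$ that accommodates these collected ``$(T^*)^{n+1}$''-type contributions (obtained if necessary by one further iteration of the dagger closure construction) gives a single bounded ambient set and finishes the argument.

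The main obstacle is the combinatorial bookkeeping of $\dvgen$-weights against the number of factors in $(T^*)^{i+1}$: the enlarged witness $T^*$ is only \emph{almost} closed under multiplication, losing one factor of~$\dvgen$ per additional product factor, so absorbing the tail $\sum_{i\ge n}\dvgen^i\dvr F_n^{i+1}$ into the $\dvgen^n$-remainder requires careful use of $\dvgen^p(T^*)^{p+1}\subseteq T^*$ together with the splitting $(T^*)^{i+1}=(T^*)^{i-n+1}\cdot(T^*)^n$. The identity $\dvgen^p(T^*)^{p+1}\subseteq T^*$, derivable by a one-line induction from $\dvgen(T^*)^2\subseteq T^*$, is the essential leverage throughout.
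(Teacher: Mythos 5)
Your argument for the product claim (that $D'$ is again a bornological algebra) is fine and matches what the paper leaves as an easy observation. The semidagger part, however, has a genuine gap exactly where you wave at it: absorbing the tail.

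Up to the inclusion
\(S_1 \subseteq \sum_{i=0}^\infty \dvgen^i \dvr F_n^{i+1} + \dvgen^n T^*\)
your manipulations are correct, as is the estimate \(\dvgen^i (T^*)^{i+1} \subseteq \dvgen^n (T^*)^{n+1}\) for \(i\ge n\). But to certify \(S_1\) compactoid you need a \emph{single} bounded ambient module \(\tilde T\), independent of \(n\), with \(S_1 \subseteq \dvr G_n + \dvgen^n \tilde T\) for every \(n\). Your tail lands in \(\dvgen^n (T^*)^{n+1}\), so you would need \((T^*)^{n+1} \subseteq \tilde T\) for all \(n\) simultaneously. There is no reason for \(\bigcup_n (T^*)^{n+1}\) to be bounded in a general semidagger algebra: the semidagger condition controls \(\sum_n \dvgen^n (T^*)^{n+1}\), not \(\sum_n (T^*)^{n+1}\). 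And ``one further iteration of the dagger closure'' is a fixed point, \((T^*)^* = T^*\), so it produces nothing larger. The source of the trouble is that \(T^*\) uses up the entire \(\dvgen\)-weight: the identity \(\dvgen^p (T^*)^{p+1} \subseteq T^*\) is exactly saturated, leaving no slack to peel off an extra factor \(\dvgen^n\).

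The paper's proof avoids this by replacing the full-weight closure with the \emph{half-weight} closure
\(T_{1/2} \defeq \sum_{i=0}^\infty \dvgen^{\lceil i/2\rceil} T^{i+1}\),
which is still bounded by the semidagger property (this is
\cite{Cortinas-Cuntz-Meyer-Tamme:Nonarchimedean}*{Lemma 3.1.10}). The point is that
\(\dvgen^n T^{n+1} \subseteq \dvgen^{\lfloor n/2\rfloor} T_{1/2}\), so a reserve factor \(\dvgen^{\lfloor n/2\rfloor}\) is left over, and this reserve tends to infinity with \(n\). (The paper then packages the finite sets into a null sequence via Proposition~\ref{prop:equivalent_compactoid} and uses this reserve to show that the countable family of products \(\dvgen^n x_{i_0}\dotsm x_{i_n}\) is a null sequence in \(T_{1/2}\); your direct \((T,\{F_n\})\)-witness version would also go through once you substitute \(T_{1/2}\) for \(T^*\), splitting the sum so that the reserve \(\dvgen^{\lfloor n/2\rfloor}\) absorbs the tail into \(\dvgen^{\lfloor n/2\rfloor} T_{1/2}\).) Without some such half-weight trick your absorption step simply does not close.
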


\begin{proof}
  It is easy to see that the product of compactoid subsets is again
  compactoid, so that~\(D'\) is a bornological algebra.  Now
  assume~\(D\) to be semidagger.  We are going to prove that~\(D'\)
  is semidagger as well.  Let~\(S\) be a compactoid
  \(\dvr\)\nb-submodule of~\(D\).  We must prove that
  \(S_1 \defeq \sum_{n=0}^\infty \dvgen^n S^{n+1}\) is compactoid as
  well.  Since~\(S\) is compactoid, there are a bounded subset~\(T\)
  and a null sequence \((x_n)_{n \in \N}\) in \(T\) such that
  \(S \subseteq \sum_{n \in \N} \dvr x_n + \dvgen^k T\) for each
  \(k \in\N\).  Since~\(D\) is semidagger, the subset
  \(T_{1/2} \defeq \sum_{i=0}^\infty \dvgen^{\lceil i/2\rceil}
  T^{i+1}\supseteq T\) is still bounded
  (see~\cite{Cortinas-Cuntz-Meyer-Tamme:Nonarchimedean}*{Lemma
    3.1.10}).  For \(n\in\N\), \(i = (i_0,\dotsc,i_n)\in\N^{n+1}\),
  let \(x_i \defeq x_{i_0} \dotsm x_{i_n} \in T^n\).  Then
  \[
    S_1 = \sum_{n=0}^\infty \dvgen^n S^{n+1}
    \subseteq \sum_{n=0}^\infty \sum_{i \in \N^{n+1}} \dvr \dvgen^n x_i
    + \dvgen^k \dvgen^n T^{n+1}
    \subseteq \biggl(\sum_{n=0}^\infty \sum_{i \in\N^{n+1}}
    \dvr \dvgen^n x_i\biggr) + \dvgen^k T_{1/2}.
  \]
  We may arrange the countable set of elements~\((\dvgen^n x_i)\)
  for \(n\in\N\), \(i\in \N^{n+1}\) into a sequence that converges
  to zero in~\(T_{1/2}\).  Therefore, the inclusion above witnesses
  that~\(S_1\) is compactoid.
\end{proof}

\begin{lemma}
  \label{lem:compactoid_dagger}
  If~\(D\) is a dagger algebra, then so is~\(D'\).
\end{lemma}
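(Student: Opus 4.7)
The plan is simply to assemble the three preceding lemmas, since a dagger algebra is by Definition~\ref{def:dagger_algebra} exactly a complete, torsionfree, semidagger bornological $\dvr$\nb-algebra, and each of these three attributes has already been shown to pass from $D$ to $D'$.

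More concretely, I would first invoke Lemma~\ref{lem:compactoid_semidagger} to conclude that $D'$ is a bornological algebra which is again semidagger. Next I would apply Lemma~\ref{lem:compactoid_torsionfree} to obtain that $D'$ is torsionfree, using that $D$ is torsionfree as part of the dagger hypothesis. Finally, Lemma~\ref{lem:compactoid_complete} gives that $D'$ is complete, since $D$ is. Together these three facts match the definition of a dagger algebra for~$D'$.

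There is essentially no obstacle, since the hard work has been distributed among the three lemmas; the present statement is a formal corollary. The only thing worth flagging is that the three lemmas really do apply simultaneously without any interference: semidagger-ness was proved for any bornological algebra, and torsionfreeness and completeness for any bornological $\dvr$\nb-module, so all three conclusions hold for the single object $D'$ at once. Hence $D'$ is a dagger algebra, completing the proof.
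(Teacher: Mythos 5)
Your proof is correct and matches the paper's own argument exactly: the paper also cites Lemmas~\ref{lem:compactoid_complete}, \ref{lem:compactoid_torsionfree}, and~\ref{lem:compactoid_semidagger} and concludes immediately.
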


\begin{proof}
  This follows from Lemmas \ref{lem:compactoid_complete},
  \ref{lem:compactoid_torsionfree}
  and~\ref{lem:compactoid_semidagger}.
\end{proof}

\begin{proposition}
  \label{prop:compactoid-dagger}
  Let~\(R\) be a nuclear bornological \(\dvr\)\nb-algebra.  Then
  \(\ling{R}\) and \(R^\updagger = \comb{\ling{R}}\) are nuclear.
\end{proposition}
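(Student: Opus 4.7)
The plan is to reduce both claims to results already established. First I would prove that $\ling{R}$ is nuclear, and then invoke Lemma~\ref{lem:completion_compactoid} to conclude that $R^\updagger = \comb{\ling{R}}$ is nuclear.

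For the first step, any bounded $\dvr$-submodule of $\ling{R}$ is contained in one of the form $S_1 \defeq \sum_{i=0}^\infty \dvgen^i S^{i+1}$ for some bounded $S \subseteq R$, so it suffices to show $S_1$ is compactoid in $\ling{R}$. I would first observe that $S$, being compactoid in $R$ by nuclearity, remains compactoid when viewed in $\ling{R}$: if $T \subseteq R$ is a bounded witness with $S \subseteq \dvr F_n + \dvgen^n T$ for finite $F_n \subseteq T$, then $T$ is bounded in $\ling{R}$ as well (the identity $R \to \ling{R}$ is bounded), and the same inclusions witness compactoidness of $S$ in $\ling{R}$.

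Since $\ling{R}$ is semidagger by construction, the argument of Lemma~\ref{lem:compactoid_semidagger} now applies verbatim inside $\ling{R}$: starting from a bounded $T \subseteq \ling{R}$ and a null sequence $(x_n)$ in $T$ with $S \subseteq \sum_n \dvr x_n + \dvgen^k T$ (via Proposition~\ref{prop:equivalent_compactoid}), one forms the bounded submodule $T_{1/2} \defeq \sum_i \dvgen^{\lceil i/2\rceil} T^{i+1}$ of $\ling{R}$ and arranges the countable family $(\dvgen^n x_i)_{n\in\N,\,i\in\N^{n+1}}$ into a null sequence in $T_{1/2}$, witnessing that $S_1$ is compactoid in $\ling{R}$. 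Hence $\ling{R}$ is nuclear. Lemma~\ref{lem:completion_compactoid} then immediately gives nuclearity of $R^\updagger = \comb{\ling{R}}$.

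The potential obstacle is verifying that the witness for compactoidness transfers correctly from $R$ to $\ling{R}$ and that the construction in Lemma~\ref{lem:compactoid_semidagger} goes through unchanged — but both points reduce to the definitional facts that $\ling{R}$'s bornology contains that of $R$ and that $\ling{R}$ is semidagger. In effect, Proposition~\ref{prop:compactoid-dagger} amounts to combining Lemma~\ref{lem:compactoid_semidagger} applied to the semidagger algebra $\ling{R}$ with Lemma~\ref{lem:completion_compactoid}.
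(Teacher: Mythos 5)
Your proof is correct and follows essentially the same route as the paper's: reduce to $\ling{R}$ via Lemma~\ref{lem:completion_compactoid}, then show that any linear-growth submodule $\sum_i\dvgen^iS^{i+1}$ is compactoid by combining the compactoidness of $S$ (from nuclearity of $R$) with the semidagger structure of $\ling{R}$, using the same product-of-null-sequences construction that appears in Lemma~\ref{lem:compactoid_semidagger}. The only cosmetic difference is that you phrase the second step as an explicit invocation of the argument of Lemma~\ref{lem:compactoid_semidagger} applied to $D=\ling{R}$, whereas the paper reruns the construction inline.
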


\begin{proof}
  By Lemma~\ref{lem:completion_compactoid}, we only need to prove
  that~\(\ling{R}\) is nuclear.  Let~\(S\) be a subset of linear
  growth in~\(R\).  Then there is a bounded submodule
  \(T \subseteq R\) with
  \(S \subseteq T_1 \defeq \sum_{j=0}^\infty \dvgen^j T^{j+1}\).
  Since~\(R\) is compactoid, Proposition
  \ref{prop:equivalent_compactoid} gives a bounded
  submodule~\(\tilde{T}\) of~\(R\) and a null sequence
  \((x_n) \in \tilde{T}\) such that
  \(T \subseteq \sum_{n \in \N} \dvr x_n + \dvgen^k \tilde{T}\) for
  each \(k \geq 1\).  Now we consider the countable set of all
  products \(\dvgen^j x_{i_1} \dotsm x_{i_{j+1}}\) for
  \(j,i_1,\dotsc,i_{j+1}\in\N\).  For each~\(k\), there are only
  finitely many such terms that do not belong
  to~\(\dvgen^k \tilde{T}\).  Therefore, we may order these products
  into a sequence~\((y_m)_{m\in\N}\) that converges to~\(0\) in the
  \(\dvgen\)\nb-adic topology on~\(\tilde{T}\).  Now it follows from
  the construction that
  \(T_1\subseteq \sum_{m \in \N} \dvr y_m + \dvgen^k \tilde{T}\)
  for each \(k\in \N\).  Thus~\(T_1\)
  is compactoid.  Hence so is~\(S\).
\end{proof}

\begin{corollary}
  \label{cor:Monsky-Washnitzer-compactoid}
  Let~\(R\) be a torsionfree bornological algebra with the fine
  bornology.  Then \(R^\updagger = \comb{\ling{R}}\) is nuclear.
\end{corollary}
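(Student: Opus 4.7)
The plan is to deduce this immediately from Proposition~\ref{prop:compactoid-dagger} once we observe that the hypothesis already provides nuclearity of~\(R\). More precisely, Example~\ref{exa:torsion-compactoid} records that the fine bornology on any \(\dvr\)\nb-module is nuclear: every finitely generated \(\dvr\)\nb-submodule~\(F\) serves as its own witness~\(T\), because \(F/\dvgen^n F\) is finitely generated over the Noetherian ring~\(\dvr\), so that one may take \(F_n\subseteq F\) to be a finite set whose image generates \(F/\dvgen^n F\) and obtain \(F\subseteq \dvr F_n + \dvgen^n F\). Hence a torsionfree bornological algebra~\(R\) with the fine bornology is a nuclear bornological \(\dvr\)\nb-algebra in the sense of Definition~\ref{def:compactoid}.

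With nuclearity of~\(R\) in hand, Proposition~\ref{prop:compactoid-dagger} directly yields that \(R^\updagger = \comb{\ling{R}}\) is nuclear, since that proposition asserts precisely that nuclearity passes from~\(R\) first to the linear growth bornology~\(\ling{R}\) and then to its completion. There is really no additional work to do: the corollary is the specialisation of Proposition~\ref{prop:compactoid-dagger} to the fine bornology, combined with the trivial observation of Example~\ref{exa:torsion-compactoid}.

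Since the argument is purely a chain of citations, there is no genuine obstacle. The only conceptual point worth flagging is why the torsionfreeness hypothesis appears at all: it is needed not for nuclearity itself, but to ensure that the dagger completion \(R^\updagger = \comb{\ling{R}}\) is well defined as a dagger algebra (cf.\ the discussion after the definition of \(\ling{D}\), which uses \cite{Meyer-Mukherjee:Bornological_tf}*{Proposition~3.8}); nuclearity of the intermediate and final objects is then automatic from the two preceding results.
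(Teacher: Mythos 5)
Your proof is correct and matches the paper's own argument exactly: nuclearity of the fine bornology from Example~\ref{exa:torsion-compactoid}, followed by Proposition~\ref{prop:compactoid-dagger}. The only minor remark is that the unpacking of Example~\ref{exa:torsion-compactoid} is slightly more elaborate than necessary (one may take \(F_n\) to be a fixed finite generating set of \(F\) for all~\(n\), with no appeal to Noetherianity), but the reasoning is sound and the observation about the role of torsionfreeness is apt.
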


\begin{proof}
  The fine bornology on~\(R\) is nuclear by
  Example~\ref{exa:torsion-compactoid}.  Then apply
  Proposition~\ref{prop:compactoid-dagger}.
\end{proof}
 
Let~\(R\) be any torsionfree bornological \(\dvr\)\nb-algebra.
There is a canonical bounded algebra homomorphism
\(R \to R^\updagger\).  Since the compactoid bornology is
functorial, it induces a bounded algebra homomorphism
\(R' \to (R^\updagger)'\).  The target is a dagger algebra by
Lemma~\ref{lem:compactoid_dagger}.  So this homomorphism extends
uniquely to a bounded homomorphism
\[
  (R')^\updagger \to (R^\updagger)'.
\]
In general, the homomorphism \((R')^\updagger \to (R^\updagger)'\)
need not be invertible.  The following proposition describes the two
situations where we know this:

\begin{proposition}
  \label{prop:linear_growth_compactoid-commutes}
  Let~\(R\) be a torsionfree bornological algebra.  If~\(R\) is
  nuclear or dagger, then the canonical map
  \((R')^\updagger \to (R^\updagger)'\) is an isomorphism.
\end{proposition}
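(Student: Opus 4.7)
The plan is to handle the two hypotheses separately; in each case the map $(R')^\updagger \to (R^\updagger)'$ turns out to be an identification between identical bornological modules, and the only thing to check is that this identification coincides with the canonical map constructed by universal properties.

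Suppose first that $R$ is nuclear. Then by definition the identity map $R' \to R$ is a bornological isomorphism, so $\ling{R} = \ling{R'}$ and hence $(R')^\updagger = R^\updagger$ as bornological algebras. On the other hand, Proposition~\ref{prop:compactoid-dagger} tells us that $R^\updagger$ is again nuclear, so $(R^\updagger)' = R^\updagger$. Under both identifications the canonical map becomes a bounded \(\dvr\)\nb-linear self-map of $R^\updagger$ that extends the canonical inclusion $R \to R^\updagger$; by the universal property of the dagger completion it must be the identity, hence an isomorphism.

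Suppose now that $R$ is a dagger algebra. Then $R$ is already complete and its bornology is semidagger, so $\ling{R}=R$ and $R^\updagger = \comb{\ling{R}} = R$. Consequently $(R^\updagger)' = R'$. By Lemma~\ref{lem:compactoid_dagger}, $R'$ is itself a dagger algebra, and the same reasoning applied to~$R'$ gives $(R')^\updagger = R'$. The canonical map therefore becomes a bounded \(\dvr\)\nb-linear self-map of $R'$ that extends the identity $R' \to R' = (R^\updagger)'$, and is thus the identity by uniqueness of the dagger extension.

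The only real content is keeping track that the two identifications actually match the canonical comparison map; but in both cases this is forced by the universal property of the dagger completion $(-)^\updagger$ as the left adjoint to the inclusion of dagger algebras into torsionfree bornological algebras. I do not anticipate any technical obstacle beyond verifying that $R^\updagger = R$ for a dagger algebra (which follows directly from Definition~\ref{def:dagger_algebra}: the linear growth bornology is the smallest semidagger one containing the given one, so it agrees with the original bornology when the latter is already semidagger).
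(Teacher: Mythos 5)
Your proof is correct and takes essentially the same approach as the paper: split into the nuclear and dagger cases, and in each case observe that the relevant identity ($R'=R$ or $R^\updagger=R$) collapses one side of the comparison. The only cosmetic difference is that you additionally invoke Proposition~\ref{prop:compactoid-dagger} (resp.\ Lemma~\ref{lem:compactoid_dagger}) to identify both sides as literally the same object $R^\updagger$ (resp.\ $R'$), whereas the paper is terser and simply exhibits the canonical maps $(R^\updagger)' \to R^\updagger$ and $R' \to (R')^\updagger$ as the inverses; your version makes the verification that these are mutual inverses slightly more transparent.
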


\begin{proof}
  Suppose first that~\(R\) is nuclear.  Then \(R \cong R'\), so
  that \((R')^\updagger = R^\updagger\).  The canonical map
  \((R^\updagger)' \to R^\updagger\) generates the required inverse
  map \((R^\updagger)' \to (R')^\updagger\).  Next assume that~\(R\)
  is a dagger algebra.  Then \(R \cong R^\updagger\) and so
  \((R^\updagger)' \cong R'\).  The canonical map
  \(R' \to (R')^\updagger\) provides the required inverse map
  \((R^\updagger)' \to (R')^\updagger\).
\end{proof}

\section{Definition of local cyclic homology}
\label{sec:def_HL}

Our definition of local cyclic homology is based on the
\emph{analytic} cyclic homology theories introduced in
\cites{Cortinas-Meyer-Mukherjee:NAHA, Meyer-Mukherjee:HA}.  We do
not recall how they are defined.  Suffice it to say the following.
Let \(\overleftarrow{\mathsf{Ind}(\mathsf{Ban}_\dvf)}\) be the
quasi-Abelian category of countable projective systems of inductive
systems of Banach \(\dvf\)\nb-vector spaces.  There are functors
\begin{align*}
  \HAC \colon \bigl\{ \text{complete, torsionfree bornological
    $\dvr$-algebras}  \bigr\}
  &\to
  \mathsf{Der}\Bigl(\overleftarrow{\mathsf{Ind}(\mathsf{Ban}_\dvf)}\Bigr),\\
  \HAC \colon \bigl\{ \resf\text{-algebras}  \bigr\} &\to
  \mathsf{Der}\Bigl(\overleftarrow{\mathsf{Ind}(\mathsf{Ban}_\dvf)}\Bigr).
\end{align*}
Their definitions are based on the Cuntz--Quillen approach to
cyclic homology.  The first functor is invariant under
dagger homotopies and Morita equivalences and satisfies excision for
extensions with a bounded \(\dvr\)\nb-linear section.  The second
functor is invariant under polynomial homotopies, stable for
matrices and satisfies excision for all extensions of
\(\resf\)\nb-algebras.  To be more precise, the first functor above
is the composite of the functor defined
in~\cite{Cortinas-Meyer-Mukherjee:NAHA} with the dissection functor
from the category of complete bornological \(\dvf\)\nb-vector spaces
to the category of inductive systems of Banach \(\dvf\)\nb-vector
spaces.  This composite is already used
in~\cite{Meyer-Mukherjee:HA}.  The main result
in~\cite{Meyer-Mukherjee:HA} is the following:

\begin{theorem}[\cite{Meyer-Mukherjee:HA}*{Theorem~5.9}]
  \label{thm:pro-dagger_lifting}
  Let~\(A\) be an \(\resf\)\nb-algebra.  Let \(D=(D_n)_{n\in\N}\) be
  a projective system of dagger algebras that is fine
  mod~\(\dvgen\).  Let \(\varrho\colon D\to A\) be a
  pro-homomorphism, represented by a coherent family of surjective
  homomorphisms \(\varrho_n\colon D_n \to A\) for \(n\in\N\).
  Assume that \(\ker \varrho = (\ker \varrho_n)_{n\in\N}\) is
  analytically nilpotent.  Then there is a canonical
  quasi-isomorphism \(\HAC(A) \cong \HAC(D)\).
\end{theorem}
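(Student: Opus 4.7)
The plan is to exploit a Goodwillie-type invariance property of the analytic cyclic homology functor~$\HAC$ on pro-dagger algebras: an extension of pro-dagger algebras with an analytically nilpotent kernel should induce a quasi-isomorphism on~$\HAC$. The theorem then follows by a standard zig-zag between the given lifting~$D$ and the canonical lifting used to \emph{define} $\HAC(A)$.

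First, I would unwind the definition of $\HAC(A)$ for an $\resf$-algebra~$A$ from~\cite{Meyer-Mukherjee:HA}: one fixes a canonical projective system of dagger algebras~$T$, built from a tensor-algebra construction applied to a free resolution of~$A$, together with a surjective pro-homomorphism $T \onto A$ whose kernel is analytically nilpotent and such that~$T$ is fine mod~$\dvgen$. By definition, $\HAC(A) \defeq \HAC(T)$. The content of the theorem is therefore that the given lifting $(D,\varrho)$ computes the same derived object.

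Next, I would form the fiber product $E \defeq T \times_A D$ in the pro-category of bornological $\dvr$\nb-algebras. Both projections $E \onto T$ and $E \onto D$ are surjective, and their kernels are, respectively, $\ker \varrho$ and $\ker(T \onto A)$, which are analytically nilpotent by hypothesis and by construction. One must check that~$E$ is again a projective system of dagger algebras that is fine mod~$\dvgen$: the fiber product of dagger algebras inherits the dagger property because semidagger bornologies pass to subalgebras, torsionfreeness and completeness are preserved by kernels of bounded maps, and fineness mod~$\dvgen$ is preserved under passage to subobjects of fine-mod-$\dvgen$ objects. Applying the Goodwillie-type invariance to the two extensions then produces
\[
  \HAC(D) \xleftarrow{\sim} \HAC(E) \xrightarrow{\sim} \HAC(T) = \HAC(A),
\]
and the composite is the required canonical quasi-isomorphism.

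The main obstacle is the Goodwillie-type invariance itself: one must show that for any extension $K \into E \onto B$ of pro-dagger algebras with $K$ analytically nilpotent, the map $\HAC(E) \to \HAC(B)$ is a quasi-isomorphism. This is the technical heart of the theory and ultimately reduces to contractibility of an explicit complex built from the powers of the ideal~$K$, using simultaneously the analytic nilpotence, the semidagger bound on~$K$, and completeness. A secondary issue is canonicity: the resulting quasi-isomorphism must be shown to be independent of the choice of the reference lifting~$T$, which is handled by running the fiber-product argument once more for two such choices and checking that the two zig-zags agree in the derived category.
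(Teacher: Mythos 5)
This paper does not prove the statement; it imports it verbatim as Theorem~5.9 from \cite{Meyer-Mukherjee:HA} and uses it as a black box, so there is no in-paper proof against which to compare your attempt. That said, your outline is the standard Cuntz--Quillen comparison between two liftings via their fibre product together with Goodwillie-type invariance of $\HAC$ under analytically nilpotent pro-dagger extensions, and this is very plausibly the shape of the argument in the cited reference. Your identification of that invariance as the technical heart is correct.

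One assertion in the middle step is a genuine gap. You claim that $E = T \times_A D$ is fine mod~$\dvgen$ because ``fineness mod~$\dvgen$ is preserved under passage to subobjects of fine-mod-$\dvgen$ objects.'' That general principle is false. The natural map $E/\dvgen E \to (T\times D)/\dvgen(T\times D)$ need not be injective: from the short exact sequence $E \into T\times D \onto A$, tensoring over~$\dvr$ with~$\resf$ shows its kernel is $\mathrm{Tor}_1^\dvr(A,\resf) \cong A$, which may well be infinite-dimensional. So a bounded subset of~$E$ could a priori have an image in $E/\dvgen E$ that spreads over a large piece of that kernel, and the fine-mod-$\dvgen$ property of $T\times D$ alone does not exclude this. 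What actually rescues the step is the boundedness of $\varrho_T\colon T\to A$ and $\varrho_D\colon D\to A$ with $A$ carrying the \emph{fine} bornology: after subtracting from a bounded $S\subseteq E$ a finitely generated piece covering its image in $(T\times D)/\dvgen(T\times D)$, the remaining $\dvgen$-divisible coordinates live (after dividing by~$\dvgen$, using torsionfreeness) in bounded subsets of $T$ and $D$ whose $\varrho$-images in~$A$ are finitely generated; these control the $A$-component of the image in $E/\dvgen E$. This argument needs to be made explicit rather than deduced from a nonexistent subobject heredity.

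It is also worth observing that the Goodwillie step by itself only requires $E$ to be a pro-dagger algebra with analytically nilpotent kernels over $T$ and over~$D$; it does not require $E$ to be fine mod~$\dvgen$. So the zig-zag $\HAC(D) \leftarrow \HAC(E) \rightarrow \HAC(T) = \HAC(A)$ goes through regardless, provided the invariance theorem in \cite{Meyer-Mukherjee:HA} is indeed stated at that level of generality -- which you should verify.
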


In particular, if~\(D\) is a dagger algebra and fine mod~\(\dvgen\),
then \(\HAC(D) \cong \HAC(D/\dvgen D)\).  This result will be the
basis for our study of local cyclic homology.  Here it is crucial to
restrict to dagger algebras that are fine mod~\(\dvgen\).  We do not
know how to compute \(\HAC(D)\) when~\(D\) is a Banach
\(\dvr\)\nb-algebra as in Example~\ref{exa:Banach_algebra}.

\begin{definition}
  \label{definition:HL}
  Let~\(D\) be a dagger algebra and let~\(D'\) be the algebra~\(D\)
  with the compactoid bornology.  The \emph{local cyclic homology
    complex} of~\(D\) is the chain complex
  \(\HLC(D) \defeq \HAC(D')\), viewed as an object in the derived
  category of
  \(\overleftarrow{\mathsf{Ind}(\mathsf{Ban}_\dvf)}\).
\end{definition}

\begin{lemma}
  \label{functoriality:HL}
  \(\HLC\) is a functor on the category of dagger algebras.
\end{lemma}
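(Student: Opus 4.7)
The plan is to exhibit \(\HLC\) as a composite of two functors and then verify functoriality of the compactoid-bornology assignment \(D\mapsto D'\) on the category of dagger algebras.

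First I would observe that on objects there is nothing to check beyond showing that \(\HAC(D')\) makes sense for every dagger algebra \(D\). By Lemma~\ref{lem:compactoid_dagger}, if \(D\) is a dagger algebra then so is \(D'\); in particular \(D'\) is a complete, torsionfree, semidagger bornological \(\dvr\)\nb-algebra, so it lies in the domain of the first \(\HAC\) functor recalled at the start of Section~\ref{sec:def_HL}.

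Next I would check that \(D\mapsto D'\) is functorial from dagger algebras to dagger algebras. Given a bounded algebra homomorphism \(f\colon D_1\to D_2\), one has to show that \(f\) remains bounded as a map \(D_1'\to D_2'\), i.e., that it sends compactoid subsets to compactoid subsets. This is a direct unwinding of Definition~\ref{def:compactoid}: if \(S\subseteq D_1\) is compactoid, with witnessing bounded submodule \(T\supseteq S\) and finite sets \(F_n\subseteq T\) satisfying \(S\subseteq \dvr F_n+\dvgen^n T\), then \(f(T)\) is bounded in \(D_2\), \(f(F_n)\) is finite, and applying \(f\) to the inclusion gives \(f(S)\subseteq \dvr f(F_n)+\dvgen^n f(T)\), so \(f(S)\) is compactoid in \(D_2\). (Alternatively, this is immediate from the adjunction established in Lemma~\ref{lem:compactoid_adjoint_functor}, once one notes that the target dagger algebra \(D_2'\) is nuclear by Proposition~\ref{pro:compactoid-nuclear}.) Identity maps and composites are visibly preserved since \((f\circ g)'=f'\circ g'\) on underlying sets.

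Finally I would conclude by composing: \(\HLC = \HAC\circ(-)'\) where \((-)'\) is the functor just constructed and \(\HAC\) is the functor from the first display of Section~\ref{sec:def_HL}. No step here is a real obstacle; the whole content of the lemma is that the earlier structural results, in particular Lemmas~\ref{lem:compactoid_complete}, \ref{lem:compactoid_torsionfree} and~\ref{lem:compactoid_semidagger} packaged as Lemma~\ref{lem:compactoid_dagger}, suffice to guarantee that the definition \(\HLC(D)\defeq\HAC(D')\) lands in the correct target category and respects morphisms.
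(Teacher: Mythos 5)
Your proof is correct and takes essentially the same approach as the paper, which simply cites Lemma~\ref{lem:compactoid_adjoint_functor} for functoriality of the compactoid bornology and notes that $\HAC$ is a functor. Your direct verification that bounded homomorphisms preserve compactoid subsets is exactly the core step of that lemma's proof, spelled out in situ.
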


\begin{proof}
  The compactoid bornology is functorial by
  Lemma~\ref{lem:compactoid_adjoint_functor}, and so is \(\HAC\).
\end{proof}

\begin{theorem}
  \label{the:dependence_mod_p}
  Let \(D_1\) and~\(D_2\) be dagger algebras.  If
  \(D_1/\dvgen D_1 \cong D_2/\dvgen D_2\), then
  \(\HLC(D_1) \cong \HLC(D_2)\).
\end{theorem}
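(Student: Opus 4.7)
The plan is to reduce both sides to the analytic cyclic homology of the common reduction mod~\(\dvgen\), using Theorem~\ref{thm:pro-dagger_lifting} as the key input. By definition, \(\HLC(D_i) = \HAC(D_i')\), where \(D_i'\) is \(D_i\) equipped with the compactoid bornology. I would prove that \(\HAC(D_i') \cong \HAC(D_i/\dvgen D_i)\) for each \(i \in \{1,2\}\) and then compose with the given isomorphism \(D_1/\dvgen D_1 \cong D_2/\dvgen D_2\) via the functoriality of \(\HAC\) on \(\resf\)\nb-algebras.

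To invoke Theorem~\ref{thm:pro-dagger_lifting} (applied to the constant pro-system \((D_i')_{n\in\N}\)), I would verify that \(D_i'\) is a dagger algebra that is fine mod~\(\dvgen\). The first property is precisely Lemma~\ref{lem:compactoid_dagger}. For the second, specialising \(n=1\) in Definition~\ref{def:compactoid}, any compactoid subset \(S\) satisfies \(S \subseteq \dvr F_1 + \dvgen T\) for some finite set \(F_1\subseteq T\), so the image of any bounded subset of \(D_i'\) in \(D_i'/\dvgen D_i'\) is finitely generated; that is, the quotient bornology is the fine one. The corollary noted immediately after Theorem~\ref{thm:pro-dagger_lifting} then supplies a natural quasi-isomorphism \(\HAC(D_i') \cong \HAC(D_i'/\dvgen D_i')\).

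It remains to identify \(D_i'/\dvgen D_i'\) with \(D_i/\dvgen D_i\) as bornological \(\resf\)\nb-algebras. The underlying \(\resf\)\nb-algebras coincide, since the passage from \(D_i\) to \(D_i'\) alters only the bornology. Both candidate bornologies on the quotient are fine: the quotient bornology from \(D_i'\) by the previous paragraph, and the default bornology on an \(\resf\)\nb-algebra by convention. Chaining the resulting isomorphisms and invoking the hypothesis yields
\[
  \HLC(D_1) \cong \HAC(D_1/\dvgen D_1) \cong \HAC(D_2/\dvgen D_2) \cong \HLC(D_2),
\]
as required.

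I do not expect any real obstacle here: all the technical substance sits inside Theorem~\ref{thm:pro-dagger_lifting}, established in the predecessor paper, together with Lemma~\ref{lem:compactoid_dagger} and the elementary observation that the compactoid bornology is fine mod~\(\dvgen\). Both ingredients are already in place, so the argument reduces to a short chain of natural quasi-isomorphisms. If any hidden difficulty were to arise, it would most plausibly be in confirming that the quasi-isomorphism furnished by Theorem~\ref{thm:pro-dagger_lifting} is natural enough to intertwine with the identification \(D_1/\dvgen D_1 \cong D_2/\dvgen D_2\); but naturality of the mod-\(\dvgen\) reduction step is exactly what that theorem guarantees.
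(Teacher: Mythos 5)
Your proposal is correct and follows essentially the same route as the paper's own proof: invoke Lemma~\ref{lem:compactoid_dagger} to see that $D_i'$ is a dagger algebra, observe that the compactoid bornology is fine mod~$\dvgen$, and apply Theorem~\ref{thm:pro-dagger_lifting} to obtain $\HAC(D_i')\cong\HAC(D_i/\dvgen D_i)$ for each $i$, then chain the isomorphisms. The only difference is that you spell out the fine-mod-$\dvgen$ verification and the identification of the underlying $\resf$-algebras, which the paper leaves as immediate.
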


\begin{proof}
  By Lemma~\ref{lem:compactoid_dagger}, the algebras \(D_1'\)
  and~\(D_2'\) are again dagger algebras.  They are fine
  mod~\(\dvgen\) by construction of the compactoid bornology, and
  \(A\defeq D'_1/\dvgen D'_1 \cong D'_2/\dvgen D'_2\).
  Theorem~\ref{thm:pro-dagger_lifting} implies
  \[
    \HLC(D_1)
    \cong \HAC(D_1')
    \cong \HAC(A)
    \cong \HAC(D_2')
    \cong \HLC(D_2).\qedhere
  \]
\end{proof}

Theorem~\ref{the:dependence_mod_p} explains our choice of the
compactoid bornology in the definition of local cyclic homology.
Its crucial features are that it is fine mod~\(\dvgen\) and that it
still gives a dagger algebra.  This ensures that \(\HLC(D)\) for a
dagger algebra~\(D\) depends only on \(D/\dvgen D\).  There are,
however, other bornologies that may work just as well.  For
instance, we could give a dagger algebra the linear growth bornology
generated by the fine bornology.  Some work would, however, be
required to check whether this bornology is complete.  Another
advantage of the compactoid bornology is that it does not use the
multiplication and that it is analogous to the precompact bornology
in the archimedean case, which is used in that setting to define
local cyclic homology (see~\cite{Meyer:HLHA}).

\begin{corollary}
  \label{cor:HL_invariant_dagger}
  Let~\(R\) be a torsionfree \(\dvr\)\nb-algebra and
  let~\(\coma{R}\) be its \(\dvgen\)\nb-adic completion.  Give~\(R\)
  the fine bornology and equip~\(\coma{R}\) with the bornology where
  all subsets are bounded.  There are natural quasi-isomorphisms
  \[
    \HAC(R^\updagger) \cong \HLC(R^\updagger) \cong \HLC(\coma{R}).
  \]
\end{corollary}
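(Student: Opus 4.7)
The plan is to deduce both quasi-isomorphisms from results already established. For the first isomorphism $\HAC(R^\updagger) \cong \HLC(R^\updagger)$, I would apply Corollary~\ref{cor:Monsky-Washnitzer-compactoid}: since $R$ carries the fine bornology and is torsionfree, $R^\updagger$ is nuclear. By the definition of nuclearity, every bounded subset of $R^\updagger$ is compactoid, so $(R^\updagger)'$ and $R^\updagger$ coincide as bornological $\dvr$\nb-algebras. Applying $\HAC$ then yields $\HLC(R^\updagger) \defeq \HAC((R^\updagger)') = \HAC(R^\updagger)$.

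For the second isomorphism $\HLC(R^\updagger) \cong \HLC(\coma{R})$, the strategy is to invoke Theorem~\ref{the:dependence_mod_p}. First I would verify that both $R^\updagger$ and $\coma{R}$ are dagger algebras. The former is a dagger algebra by the general theory of dagger completions recalled in Section~\ref{sec:prelim}; the latter follows from Example~\ref{exa:Banach_algebra}, once one notes that $\coma{R}$ is a $\dvgen$\nb-adically complete, torsionfree $\dvr$\nb-algebra (torsionfreeness of~$\coma{R}$ comes from flatness of $\dvgen$\nb-adic completion over the Noetherian ring~$\dvr$). Next, I would identify both reductions mod~$\dvgen$ with $R/\dvgen R$: for $\coma{R}$ this is the standard fact that $\coma{R}/\dvgen \coma{R} \cong R/\dvgen R$; for $R^\updagger$ this is the mod-$\dvgen$ identification used repeatedly in the paper. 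Theorem~\ref{the:dependence_mod_p} then delivers $\HLC(R^\updagger) \cong \HLC(\coma{R})$, and naturality of both quasi-isomorphisms follows from functoriality of the compactoid bornology, the dagger completion, and $\HAC$.

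The only step requiring care is the identification $R^\updagger/\dvgen R^\updagger \cong R/\dvgen R$. I would handle it by unwinding the construction $R^\updagger = \comb{\ling{R}}$: the linear growth bornology is generated by submodules of the form $\sum_{i=0}^\infty \dvgen^i S^{i+1}$ for bounded $S \subseteq R$, and each such series reduces mod~$\dvgen$ to~$S$ itself. Hence passing from $R$ to $\ling{R}$ does not alter the mod-$\dvgen$ quotient, and the subsequent bornological completion commutes with modding out by~$\dvgen$ because torsionfreeness is preserved throughout. This bookkeeping step, plus the two applications of prior results above, is the entire proof.
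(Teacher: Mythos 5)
Your proposal follows the paper's proof step for step: Corollary~\ref{cor:Monsky-Washnitzer-compactoid} gives nuclearity of $R^\updagger$ and hence $(R^\updagger)' = R^\updagger$, yielding the first quasi-isomorphism; and Theorem~\ref{the:dependence_mod_p} together with the mod-$\dvgen$ identification gives the second. The one place where your justification is off the mark is the final step, the isomorphism $\comb{\ling{R}}/\dvgen\comb{\ling{R}} \cong \ling{R}/\dvgen\ling{R}$. You attribute this to ``torsionfreeness is preserved throughout,'' but torsionfreeness alone does not make the bornological completion functor commute with quotienting by~$\dvgen$. The decisive fact, which the paper invokes via \cite{Cortinas-Meyer-Mukherjee:NAHA}*{Proposition~2.3.3}, is that $\ling{R}/\dvgen\ling{R}$ carries the fine bornology (as you yourself observed) and fine bornological $\dvr$\nb-modules are already complete; it is this completeness of the quotient that forces the completion of $\ling{R}$ to have the same reduction mod~$\dvgen$. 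With that justification substituted, your argument matches the paper's.
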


\begin{proof}
  The algebra~\(\coma{R}\) is a Banach \(\dvr\)\nb-algebra as in
  Example~\ref{exa:Banach_algebra}, and~\(R^\updagger\) is a dagger
  algebra by construction.  It is nuclear by
  Corollary~\ref{cor:Monsky-Washnitzer-compactoid}.  So
  $\HAC(R^\updagger) = \HLC(R^\updagger)$.  We compute
  \[
    \coma{R}/\dvgen \coma{R}
    \cong R/\dvgen R
    = \ling{R}/\dvgen \ling{R}
    \cong \comb{\ling{R}}/\dvgen \comb{\ling{R}}
    = R^\updagger/\dvgen R^\updagger;
  \]
  the isomorphism \(\ling{R}/\dvgen \ling{R} \cong
  \comb{\ling{R}}/\dvgen \comb{\ling{R}}\) follows because
  \(\ling{R}/\dvgen \ling{R}\)
  as an \(\resf\)\nb-algebra is bornologically complete as a
  \(\dvr\)\nb-algebra (compare
  \cite{Cortinas-Meyer-Mukherjee:NAHA}*{Proposition~2.3.3}).  Now
  Theorem~\ref{the:dependence_mod_p} implies
  $\HLC(R^\updagger) \cong \HLC(\coma{R})$.
\end{proof}

\section{Homotopy invariance, excision and matricial stability}
\label{sec:formal_properties}

We are going to prove that local cyclic homology satisfies some nice
formal properties, namely, homotopy invariance, matricial stability,
excision, and an analogue of Bass' Fundamental Theorem.  These will
be used in Section~\ref{sec:computations} to compute the local
cyclic homology of certain Banach \(\dvr\)\nb-algebras.

\begin{theorem}[Homotopy invariance]
  \label{thm:homotopy_invariance}
  Let~\(D\) be a dagger algebra.  Give \(\coma{\dvr[t]}\) the
  bornology where all subsets are bounded.  There is a
  quasi-isomorphism
  \[
    \HLC(D) \cong \HLC\bigl(D \hot \coma{\dvr[t]}\bigr).
  \]
\end{theorem}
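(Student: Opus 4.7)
The plan is to reduce this to polynomial homotopy invariance of \(\HAC\) on \(\resf\)\nb-algebras by passing to the reduction mod~\(\dvgen\) on both sides, in the spirit of Theorem~\ref{the:dependence_mod_p}.  The proof of that theorem establishes something slightly stronger than its statement: for any dagger algebra~\(B\), the compactoid version~\(B'\) is a dagger algebra (Lemma~\ref{lem:compactoid_dagger}) that is fine mod~\(\dvgen\) with \(B'/\dvgen B' = B/\dvgen B\), so Theorem~\ref{thm:pro-dagger_lifting} yields a canonical quasi-isomorphism \(\HLC(B) = \HAC(B') \cong \HAC(B/\dvgen B)\).

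First I would verify that \(D \hot \coma{\dvr[t]}\) is itself a dagger algebra, so that \(\HLC\) is defined on it.  Completeness is built into the completed bornological tensor product, torsionfreeness follows because both factors are torsionfree, and the semidagger property should be inherited by a rearrangement of series argument akin to the proof of Lemma~\ref{lem:compactoid_semidagger}.  Next I would identify the mod-\(\dvgen\) reduction: because \(D\) and \(\coma{\dvr[t]}\) are both \(\dvgen\)\nb-adically complete,
\[
  (D \hot \coma{\dvr[t]}) / \dvgen (D \hot \coma{\dvr[t]})
  \cong (D/\dvgen D) \otimes_\resf (\coma{\dvr[t]}/\dvgen \coma{\dvr[t]})
  \cong (D/\dvgen D)[t].
\]
Applying the residue-field principle above both to~\(D\) and to \(D \hot \coma{\dvr[t]}\) gives
\[
  \HLC(D) \cong \HAC(D/\dvgen D), \qquad
  \HLC(D \hot \coma{\dvr[t]}) \cong \HAC\bigl((D/\dvgen D)[t]\bigr).
\]
The theorem then follows from polynomial homotopy invariance of \(\HAC\) on \(\resf\)\nb-algebras, recalled in Section~\ref{sec:def_HL}, which supplies a natural quasi-isomorphism \(\HAC\bigl((D/\dvgen D)[t]\bigr) \cong \HAC(D/\dvgen D)\).

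The main obstacle will be the first step.  The bornology on \(\coma{\dvr[t]}\) is as coarse as possible (every subset is bounded), so I must confirm both that the completed tensor product with this coarse factor remains a dagger algebra and that no \(t\)\nb-adic completions creep into the reduction modulo~\(\dvgen\).  Once those structural points are in place, the rest is a clean two-step passage through the residue field, requiring no further unwinding of the definitions of \(\HAC\) or of the compactoid bornology.
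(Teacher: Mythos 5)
Your proposal follows the same route as the paper's own proof: pass to mod-$\dvgen$ reductions on both sides via Theorem~\ref{thm:pro-dagger_lifting} (through the compactoid bornology), identify $(D \hot \coma{\dvr[t]})/\dvgen \cong (D/\dvgen D) \otimes \resf[t]$, and invoke polynomial homotopy invariance of $\HAC$ on $\resf$\nb-algebras.  The only difference is that you explicitly flag the need to check that $D \hot \coma{\dvr[t]}$ is a dagger algebra and that the mod-$\dvgen$ reduction comes out as claimed, points the paper treats as routine; your argument is correct.
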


\begin{proof}
  The bornological algebras \(D'\) and \((D \hot \coma{\dvr[t]})'\)
  are both dagger by Proposition~\ref{lem:compactoid_dagger} and
  fine mod~\(\dvgen\).  Then Theorem~\ref{thm:pro-dagger_lifting}
  and \cite{Meyer-Mukherjee:HA}*{Theorem~8.1} imply
  \[
    \HLC(D)
    \cong \HAC(D/\dvgen D)
    \cong \HAC(D/ \dvgen D \otimes \resf[t])
    \cong  \HAC\bigl(\bigl(D \hot \coma{\dvr[t]}\bigr)'\bigr)
    = \HLC\bigl(D \hot \coma{\dvr[t]}\bigr).\qedhere
  \]
\end{proof}

An \emph{elementary homotopy} between two bounded homomorphisms
\(f,g\colon D_1 \rightrightarrows D_2\) is a bounded homomorphism
\(F\colon D_1 \to D_2 \hot \coma{\dvr[t]}\) with
\(\ev_0\circ F = f\) and \(\ev_1\circ F = g\).  Let \emph{homotopy}
be the equivalence relation generated by elementary homotopy.  If
\(f,g\colon D_1 \rightrightarrows D_2\) are homotopic, then
\(\HLC(f) = \HLC(g)\) by Theorem~\ref{thm:homotopy_invariance}.

\begin{corollary}
  \label{cor:Banach_algebra:homotopy_invariance}
  For any Banach \(\dvr\)\nb-algebra~\(D\), there is a natural
  quasi-isomorphism \(\HLC(D) \cong \HLC\bigl(\coma{D[t]}\bigr)\).
  Here \(\coma{D[t]}\) is the Banach \(\dvr\)\nb-algebra of all
  polynomials \(\sum_{n=0}^\infty d_n t^n\) with \(d_n \in D\) for
  all \(n\in\N\) and \(\lim d_n=0\) \(\dvgen\)\nb-adically in~\(D\).
\end{corollary}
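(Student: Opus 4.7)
The plan is to deduce this directly from Theorem~\ref{thm:homotopy_invariance} by identifying the completed bornological tensor product \(D \hot \coma{\dvr[t]}\) with the Banach \(\dvr\)\nb-algebra \(\coma{D[t]}\). Thus the corollary will follow from a single application of homotopy invariance plus an identification of the underlying algebras.

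First, I would note that the bornology on \(D\) and on \(\coma{\dvr[t]}\) in which every subset is bounded has a particularly simple tensor product: the tensor product bornology on the algebraic tensor product \(D\otimes\coma{\dvr[t]}\) is again the bornology where every subset is bounded. Hence the completion \(D\hot\coma{\dvr[t]}\) is obtained by \(\dvgen\)\nb-adically completing bounded \(\dvr\)\nb-submodules of \(D\otimes\coma{\dvr[t]}\), and the resulting complete bornological \(\dvr\)\nb-module again has the bornology where every subset is bounded.

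Second, I would identify \(D\hot\coma{\dvr[t]}\) with \(\coma{D[t]}\). The obvious bounded homomorphism \(D\otimes\dvr[t] \to \coma{D[t]}\) sending \(d\otimes t^n\) to \(d t^n\) extends by the universal properties of bornological tensor products and of completion to a bounded homomorphism \(D\hot\coma{\dvr[t]} \to \coma{D[t]}\). Its inverse is constructed by interpreting each element \(\sum d_n t^n \in \coma{D[t]}\) as a \(\dvgen\)\nb-adically convergent series in the \(\dvgen\)\nb-adic completion of the image of \(D\otimes\dvr[t]\) inside \(D\hot\coma{\dvr[t]}\); the hypothesis \(\lim d_n=0\) ensures convergence, and one checks on generators that the two maps are mutually inverse.

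Third, I would apply Theorem~\ref{thm:homotopy_invariance} to~\(D\), obtaining the quasi-isomorphism \(\HLC(D)\cong \HLC(D\hot\coma{\dvr[t]})\), and then rewrite the right hand side using the previous step as \(\HLC(\coma{D[t]})\). Naturality in~\(D\) is automatic from the naturality statement in Theorem~\ref{thm:homotopy_invariance} together with the functoriality of the identification \(D\hot\coma{\dvr[t]}\cong\coma{D[t]}\).

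The main obstacle is the identification in the second step: in principle one must check that the bornological completion of a tensor product of two Banach \(\dvr\)\nb-algebras (with the bornology where all subsets are bounded) really coincides with the \(\dvgen\)\nb-adic completion of the algebraic tensor product, rather than some subtly different object. I expect this to be straightforward because for Banach \(\dvr\)\nb-modules the completion functor agrees with \(\dvgen\)\nb-adic completion on the whole module; once this is in hand, the corollary is immediate.
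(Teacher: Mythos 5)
Your proposal is correct and is exactly the argument the paper leaves implicit: the corollary is stated without proof immediately after Theorem~\ref{thm:homotopy_invariance} precisely because, once one identifies \(D \hot \coma{\dvr[t]}\) with \(\coma{D[t]}\) for a Banach \(\dvr\)\nb-algebra~\(D\), it follows in one line. Your reduction to checking that for two Banach \(\dvr\)\nb-modules the completed bornological tensor product agrees with the \(\dvgen\)\nb-adic completion of the algebraic tensor product is the right content to fill in, and it holds because on a module with the bornology of all subsets the bornological completion is the \(\dvgen\)\nb-adic completion, and \((D\otimes \coma{\dvr[t]})/\dvgen^n \cong (D/\dvgen^n D)[t] \cong D[t]/\dvgen^n D[t]\) for all~\(n\), so both completions are \(\varprojlim_n (D/\dvgen^n D)[t] = \coma{D[t]}\).
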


\begin{theorem}[Matrix stability]
  \label{thm:matricial_stability}
  Let~\(D\) be a dagger algebra.  For a set~\(\Lambda\), let
  \(M_\Lambda(\dvr)\) be the \(\dvr\)\nb-algebra of finitely
  supported matrices indexed by \(\Lambda \times \Lambda\).  Let
  \(\lambda \in \Lambda\).  The canonical map
  \(i_\lambda \colon D \to D \hot \coma{M_\Lambda(\dvr)}\),
  \(x \mapsto e_{\lambda, \lambda} \otimes x\), induces a
  quasi-isomorphism
  \[
    \HLC(D) \cong \HLC\bigl(D \hot \coma{M_\Lambda(\dvr)}\bigr).
  \]
\end{theorem}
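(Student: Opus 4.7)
The plan is to mimic the strategy used for Theorem~\ref{thm:homotopy_invariance}, reducing the statement to the matricial stability of $\HAC$ for algebras over the residue field, which is part of the main results of~\cite{Meyer-Mukherjee:HA}. Thus I would first verify that both $D$ and $D \hot \coma{M_\Lambda(\dvr)}$ are dagger algebras (for the latter, this follows from the fact that dagger algebras are closed under completed tensor products with $\dvgen$\nb-adically complete torsionfree $\dvr$\nb-algebras, combined with Lemma~\ref{lem:compactoid_dagger} after passing to the compactoid bornology). Then I would invoke Definition~\ref{definition:HL}: $\HLC(D \hot \coma{M_\Lambda(\dvr)}) = \HAC((D \hot \coma{M_\Lambda(\dvr)})')$, and by Lemma~\ref{lem:compactoid_dagger} this primed algebra is again a dagger algebra; moreover it is fine mod~$\dvgen$ by construction of the compactoid bornology.

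Next, I would identify the reduction mod~$\dvgen$. The key computation is
\[
  \bigl(D \hot \coma{M_\Lambda(\dvr)}\bigr)\big/\dvgen\bigl(D \hot \coma{M_\Lambda(\dvr)}\bigr)
  \;\cong\; (D/\dvgen D) \otimes_\resf M_\Lambda(\resf),
\]
using that $\coma{M_\Lambda(\dvr)}/\dvgen \coma{M_\Lambda(\dvr)} \cong M_\Lambda(\resf)$ (since finitely supported matrices are already $\dvgen$\nb-adically dense in a controlled sense) and that reduction mod~$\dvgen$ commutes with the completed tensor product for torsionfree bornologically complete factors (as used, e.g., in the proof of Theorem~\ref{thm:homotopy_invariance}). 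Then Theorem~\ref{thm:pro-dagger_lifting}, applied to both dagger algebras $D'$ and $(D \hot \coma{M_\Lambda(\dvr)})'$, yields
\[
  \HLC(D) \cong \HAC(D/\dvgen D), \qquad
  \HLC\bigl(D \hot \coma{M_\Lambda(\dvr)}\bigr) \cong \HAC\bigl((D/\dvgen D) \otimes M_\Lambda(\resf)\bigr).
\]

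Finally, I would invoke the matricial stability of $\HAC$ for $\resf$\nb-algebras from~\cite{Meyer-Mukherjee:HA}, which states that the corner embedding $A \to A \otimes M_\Lambda(\resf)$, $a \mapsto e_{\lambda,\lambda}\otimes a$, induces a quasi-isomorphism for every $\resf$\nb-algebra~$A$. Applying this to $A = D/\dvgen D$ identifies the two right-hand sides above, and tracing through the construction one checks that this identification is compatible with the map induced by $i_\lambda$, so that $\HLC(i_\lambda)$ is itself a quasi-isomorphism.

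The main obstacle I anticipate is the compatibility check at the level of the derived category: Theorem~\ref{thm:pro-dagger_lifting} produces canonical quasi-isomorphisms, and to conclude that the map induced by $i_\lambda$ is a quasi-isomorphism (as opposed to just producing an abstract isomorphism between the two local cyclic homologies), one must verify that the square formed by $\HLC(i_\lambda)$ and the two instances of the comparison map of Theorem~\ref{thm:pro-dagger_lifting} commutes up to homotopy. This is naturality, but it requires that $i_\lambda$ descends to the corner embedding on the mod~$\dvgen$ reductions, which is immediate from the construction of $i_\lambda$. The other routine point to check is that the compactoid bornology on $D \hot \coma{M_\Lambda(\dvr)}$ behaves as expected under the tensor-product decomposition; this is supplied by Proposition~\ref{prop:compactoid_tensor-product}.
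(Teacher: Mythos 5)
Your proposal is correct and follows essentially the same route as the paper's own proof: reduce both sides mod~$\dvgen$ via Theorem~\ref{thm:pro-dagger_lifting}, identify $\bigl(D \hot \coma{M_\Lambda(\dvr)}\bigr)/\dvgen \cong M_\Lambda(D/\dvgen D)$, and invoke the matricial stability of $\HAC$ for $\resf$\nb-algebras from~\cite{Meyer-Mukherjee:HA}*{Proposition~8.2}. The paper's proof is terser and leaves implicit the naturality/compatibility check you flag at the end, but the underlying argument is the same.
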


\begin{proof}
  Let \(D/\dvgen D = A\).  We compute
  \[
    \HLC(D)
    \cong \HAC(A)
    \cong \HAC(M_\Lambda(A))
    \cong \HAC\bigl(\bigl(D  \hot \coma{M_\Lambda(\dvr)}\bigr)'\bigr)
    = \HLC\bigl(D \hot \coma{M_\Lambda(\dvr)}\bigr);
  \]
  here the second quasi-isomorphism uses
  \cite{Meyer-Mukherjee:HA}*{Proposition~8.2}, and the third
  quasi-isomorphism uses Theorem~\ref{thm:pro-dagger_lifting} and
  \(D \hot \coma{M_\Lambda(\dvr)}/ \dvgen \cdot \bigl(D \hot
  \coma{M_\Lambda(\dvr)}\bigr) \cong M_\Lambda(A)\).
\end{proof}

\begin{corollary}
  \label{cor:Banach_algebra:stability}
  Let~\(D\) be a Banach \(\dvr\)\nb-algebra.  There is
  quasi-isomorphism
  \(\HLC(D) \cong \HLC\bigl(\coma{M_\Lambda(D)}\bigr)\).  Here
  \(\coma{M_\Lambda(D)}\) is the Banach \(\dvr\)\nb-algebra of all
  matrices \((d_{\lambda,\mu})_{\lambda,\mu\in\Lambda}\) with
  \(d_{\lambda,\mu} \in D\) for all \(\lambda,\mu\in\Lambda\) and
  such that, for each \(k\in\N\), there are only finitely many
  \(\lambda,\mu\in\Lambda\) with
  \(d_{\lambda,\mu} \notin \dvgen^k D\).
\end{corollary}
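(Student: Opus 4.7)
The plan is to combine the general matrix stability result of Theorem~\ref{thm:matricial_stability} with the ``dependence mod~$\dvgen$'' principle of Theorem~\ref{the:dependence_mod_p}. Since any Banach $\dvr$\nb-algebra is a dagger algebra by Example~\ref{exa:Banach_algebra}, Theorem~\ref{thm:matricial_stability} applies directly to~$D$ and yields a quasi-isomorphism
\[
  \HLC(D) \cong \HLC\bigl(D \hot \coma{M_\Lambda(\dvr)}\bigr).
\]
It therefore remains to produce a natural quasi-isomorphism $\HLC\bigl(D \hot \coma{M_\Lambda(\dvr)}\bigr) \cong \HLC\bigl(\coma{M_\Lambda(D)}\bigr)$, and for this I would invoke Theorem~\ref{the:dependence_mod_p}.

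To apply that theorem, I need to verify that both algebras are dagger and that their reductions modulo~$\dvgen$ are isomorphic. The algebra $\coma{M_\Lambda(D)}$ is $\dvgen$\nb-adically complete, torsionfree, and carries a natural submultiplicative norm inherited from~$D$, so it is a Banach $\dvr$\nb-algebra and hence a dagger algebra by Example~\ref{exa:Banach_algebra}. The completed bornological tensor product $D \hot \coma{M_\Lambda(\dvr)}$ is a dagger algebra because both factors are. The defining condition on elements of $\coma{M_\Lambda(D)}$ forces all but finitely many entries to lie in~$\dvgen D$, so the reduction map $\coma{M_\Lambda(D)} \to M_\Lambda(A)$ with $A \defeq D/\dvgen D$ is surjective with kernel $\dvgen \coma{M_\Lambda(D)}$. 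That is, $\coma{M_\Lambda(D)}/\dvgen \cong M_\Lambda(A)$ as $\resf$\nb-algebras.

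For the other factor, the identification $(D \hot \coma{M_\Lambda(\dvr)})/\dvgen \cong M_\Lambda(A)$ is precisely the one already used implicitly in the proof of Theorem~\ref{thm:matricial_stability}; it follows, as in the proof of Corollary~\ref{cor:HL_invariant_dagger}, from the compatibility of reduction modulo~$\dvgen$ with completed bornological tensor products together with the elementary identifications $\coma{M_\Lambda(\dvr)}/\dvgen \cong M_\Lambda(\resf)$ and $A \otimes_\resf M_\Lambda(\resf) \cong M_\Lambda(A)$. Granted these two reduction computations, Theorem~\ref{the:dependence_mod_p} immediately supplies the desired quasi-isomorphism, and composing with the one provided by Theorem~\ref{thm:matricial_stability} finishes the argument. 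The main (though mild) obstacle is making the compatibility of reduction modulo~$\dvgen$ with the completed tensor product precise in the bornological setting; everything else is either immediate from the definitions or is a direct appeal to a result already established in the paper.
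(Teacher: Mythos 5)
Your proposal is correct, and the necessary checks (dagger property of both algebras, identification of both reductions mod~$\dvgen$ with $M_\Lambda(A)$) are made in the right places. The paper itself gives no proof of this corollary; the intended argument is most likely the shorter one of observing that, since both~$D$ and~$\coma{M_\Lambda(\dvr)}$ carry the bornology where all subsets are bounded, the completed bornological tensor product $D \hot \coma{M_\Lambda(\dvr)}$ is just the $\dvgen$\nb-adic completion of $M_\Lambda(D)$, i.e.\ $\coma{M_\Lambda(D)}$, so the corollary is a literal restatement of Theorem~\ref{thm:matricial_stability} for Banach $\dvr$\nb-algebras (this is the same pattern as Corollary~\ref{cor:Banach_algebra:homotopy_invariance} following Theorem~\ref{thm:homotopy_invariance}). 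Your route replaces this algebra isomorphism by a weaker statement — that the two algebras agree mod~$\dvgen$ — and then appeals to Theorem~\ref{the:dependence_mod_p} to conclude. That works and is arguably safer, since it avoids unwinding the completed bornological tensor product explicitly, but it costs one extra invocation of the dependence-mod-$\dvgen$ theorem; the more direct identification renders that step unnecessary.
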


Next, we prove that local cyclic homology satisfies excision for
\emph{any} extension of dagger algebras; no bounded linear section
is needed.

\begin{theorem}[Excision Theorem]
  \label{thm:excision}
  An extension of dagger algebras
  \(K \overset{i}\into E \overset{p}\onto Q\) induces an exact
  triangle
  \[
    \HLC(K) \overset{i_*}\to \HLC(E) \overset{p_*}\to \HLC(Q)
    \overset{\delta}\to \HLC(K)[-1]
  \]
  in the derived category of the quasi-abelian category
  \(\overleftarrow{\mathsf{Ind}(\mathsf{Ban}_\dvf)}\).
\end{theorem}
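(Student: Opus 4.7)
The plan is to reduce the statement to the excision theorem for analytic cyclic homology of $\resf$\nb-algebras, which (as recalled in the third paragraph preceding Theorem~\ref{thm:pro-dagger_lifting}) applies to \emph{all} extensions of $\resf$\nb-algebras without any section hypothesis. The bridge between the two settings is the identification $\HLC(D) \cong \HAC(D/\dvgen D)$ supplied by Theorem~\ref{the:dependence_mod_p} via Theorem~\ref{thm:pro-dagger_lifting}.

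First I would verify that reducing the given extension modulo~$\dvgen$ yields an extension of $\resf$\nb-algebras
\[
    K/\dvgen K \into E/\dvgen E \onto Q/\dvgen Q.
\]
Surjectivity of $E/\dvgen E \onto Q/\dvgen Q$ is immediate from the surjectivity of~$p$. The kernel of this quotient map is $(K+\dvgen E)/\dvgen E$, and the natural map $K/\dvgen K \to (K+\dvgen E)/\dvgen E$ is surjective with kernel $(K\cap \dvgen E)/\dvgen K$. The equality $K\cap \dvgen E = \dvgen K$ holds because any $k = \dvgen e$ with $k\in K$ and $e\in E$ satisfies $\dvgen\cdot p(e) = p(k) = 0$ in~$Q$, and torsionfreeness of the dagger algebra~$Q$ forces $p(e)=0$, so $e\in K$.

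Next, applying the excision theorem for $\HAC$ on $\resf$\nb-algebras to the reduced extension produces an exact triangle
\[
    \HAC(K/\dvgen K) \to \HAC(E/\dvgen E) \to \HAC(Q/\dvgen Q)
    \to \HAC(K/\dvgen K)[-1]
\]
in $\mathsf{Der}\bigl(\overleftarrow{\mathsf{Ind}(\mathsf{Ban}_\dvf)}\bigr)$. By Theorem~\ref{the:dependence_mod_p}, each vertex is quasi-isomorphic to the corresponding local cyclic homology complex of the original dagger algebra. Transporting along these quasi-isomorphisms yields the triangle asserted by the theorem.

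The step I expect to require the most care is the naturality of the identification $\HLC(D) \cong \HAC(D/\dvgen D)$, which is needed to ensure that the connecting maps in the transported triangle coincide with $\HLC(i)$ and $\HLC(p)$ rather than merely being abstractly isomorphic to them. This reduces to naturality of the quasi-isomorphism provided by Theorem~\ref{thm:pro-dagger_lifting} with respect to compatible pairs of pro-homomorphisms and $\resf$\nb-algebra homomorphisms, applied to the commutative squares formed by $i$, $p$ and the reduction maps $D \to D/\dvgen D$ (with $D\in\{K,E,Q\}$). Granted this naturality, the above triangle is identified with the desired
\[
    \HLC(K) \overset{i_*}\to \HLC(E) \overset{p_*}\to \HLC(Q)
    \overset{\delta}\to \HLC(K)[-1],
\]
completing the proof.
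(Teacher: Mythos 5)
Your proposal follows essentially the same route as the paper: reduce the extension modulo~$\dvgen$ (using torsionfreeness of~$Q$, exactly as you argue), apply the excision theorem for $\HAC$ of $\resf$\nb-algebras, and transport back along the identification $\HLC(D)\cong\HAC(D/\dvgen D)$ from Theorem~\ref{the:dependence_mod_p}. Your extra remarks --- the explicit verification that $K\cap\dvgen E=\dvgen K$ and the attention to naturality of the quasi-isomorphism in Theorem~\ref{thm:pro-dagger_lifting}, which is indeed asserted to be canonical --- are just fuller versions of steps the paper treats as immediate.
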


\begin{proof}
  The given extension induces an extension of \(\resf\)\nb-algebras
  \[
    K \otimes_\dvr \resf \into E \otimes_\dvr \resf \onto Q
    \otimes_\dvr \resf
  \]
  because~\(Q\) is torsionfree.  This induces a natural exact
  triangle
  \[
    \HAC(K \otimes_\dvr \resf)
    \xrightarrow{i_*} \HAC(E \otimes_\dvr \resf)
    \xrightarrow{p_*} \HAC(Q \otimes_\dvr \resf)
    \xrightarrow{\delta} \HAC(K \otimes_\dvr \resf)[-1]
  \]
  in the derived category by
  \cite{Meyer-Mukherjee:HA}*{Theorem~8.3}.  By
  Theorem~\ref{the:dependence_mod_p}, this exact triangle is
  isomorphic to an exact triangle as in the statement of the
  theorem.
\end{proof}

An exact triangle in a derived category implies a long exact
sequence in homology (compare
\cite{Cortinas-Meyer-Mukherjee:NAHA}*{Theorem~5.1}).

\begin{theorem}[Bass Fundamental Theorem]
  \label{the:Bass_fundamental}
  Let~\(D\) be a dagger algebra.  Then
  \(\HLC\bigl(D \hot \coma{\dvr[t,t^{-1}]}\bigr) \cong \HLC(D) \oplus
  \HLC(D)[1]\); here~\([1]\) means a degree shift.
\end{theorem}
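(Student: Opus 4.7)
The plan is to follow the pattern of Theorems~\ref{thm:homotopy_invariance}, \ref{thm:matricial_stability}, and~\ref{thm:excision}: reduce the identity for \(\HLC\) to a corresponding identity for \(\HAC\) of \(\resf\)\nb-algebras, and then quote that as input from~\cite{Meyer-Mukherjee:HA}. Set \(A \defeq D/\dvgen D\). The Banach \(\dvr\)\nb-algebra \(\coma{\dvr[t,t^{-1}]}\) is a dagger algebra by Example~\ref{exa:Banach_algebra}, and its reduction mod~\(\dvgen\) is \(\resf[t,t^{-1}]\). Consequently \(D \hot \coma{\dvr[t,t^{-1}]}\) is again a dagger algebra whose reduction mod~\(\dvgen\) is the ordinary tensor product \(A \otimes_\resf \resf[t,t^{-1}] = A[t,t^{-1}]\). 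Applying Theorem~\ref{the:dependence_mod_p} on both sides then yields
\[
\HLC(D) \cong \HAC(A), \qquad \HLC\bigl(D \hot \coma{\dvr[t,t^{-1}]}\bigr) \cong \HAC(A[t,t^{-1}]).
\]

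The proof therefore reduces to establishing a Bass Fundamental Theorem at the level of \(\resf\)\nb-algebras, namely
\[
\HAC(A[t,t^{-1}]) \cong \HAC(A) \oplus \HAC(A)[1]
\]
for every \(\resf\)\nb-algebra~\(A\). This result fits naturally alongside the three formal properties of \(\HAC\) on \(\resf\)\nb-algebras established in~\cite{Meyer-Mukherjee:HA} (polynomial homotopy invariance, matrix stability, and excision) that the preceding proofs in this section cite.

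The hard part is this \(\resf\)\nb-algebra Bass theorem itself. In contrast to the three properties just mentioned, it does not follow from a homotopy-invariant, matricially stable, excisive theory by purely formal manipulations. A natural strategy is to apply the excision sequence to the split extension \(\ker(\ev_1) \into A[t,t^{-1}] \onto A\) given by evaluation at \(t=1\) to split off one copy of \(\HAC(A)\), and then to identify \(\HAC(\ker \ev_1)\) with \(\HAC(A)[1]\). Producing this second summand would require a genuine computation exploiting the \(\mathbb{Z}\)\nb-grading on \(A[t,t^{-1}]\): the nonzero-weight components of the analytic cyclic bicomplex are expected to assemble, via an HKR-style argument organised around the form ``\(t^{-1}\,\diff t\)'', into a complex quasi-isomorphic to \(\HAC(A)[1]\). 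That step, taking place entirely on the \(\resf\)\nb-algebra side and requiring direct analysis of the bicomplex, is the bottleneck; once it is in place, the two displayed isomorphisms above finish the theorem.
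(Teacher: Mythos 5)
Your reduction to the $\resf$\nb-algebra statement is exactly what the paper does: set $A = D/\dvgen D$, observe that $\bigl(D\hot\coma{\dvr[t,t^{-1}]}\bigr)/\dvgen \cong A\otimes\resf[t,t^{-1}]$, and conclude via Theorems~\ref{the:dependence_mod_p} and~\ref{thm:pro-dagger_lifting} that $\HLC\bigl(D\hot\coma{\dvr[t,t^{-1}]}\bigr) \cong \HAC(A\otimes\resf[t,t^{-1}])$ and $\HLC(D)\cong\HAC(A)$. That part is correct and matches the paper line for line.

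The gap is in what you do with the remaining step. You assert that the isomorphism $\HAC(A\otimes\resf[t,t^{-1}]) \cong \HAC(A)\oplus\HAC(A)[1]$ ``does not follow from a homotopy-invariant, matricially stable, excisive theory by purely formal manipulations'' and therefore requires a direct HKR-style analysis of the analytic cyclic bicomplex. Both the assertion and the resulting plan are mistaken. The Bass Fundamental Theorem for $\resf$\nb-algebras \emph{is} a formal consequence of homotopy invariance, matricial stability, and excision, obtained by the standard Cohn/Leavitt (algebraic Toeplitz) argument for the graph with one vertex and one loop: homotopy invariance computes $\HAC$ of the Cohn algebra, matricial stability handles the ideal of finite matrices, and excision glues the pieces. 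This is exactly the machinery the paper already invokes in Example~\ref{ex:Leavitt}, and the paper therefore simply cites the $\resf$\nb-algebra Bass theorem from \cite{Meyer-Mukherjee:HA}*{Corollary~8.5} and is done. Your sketch --- split off $\ev_1$, then reassemble the weight-nonzero part of the bicomplex around $t^{-1}\,\diff t$ --- is an alternative route in principle, but you leave it unfinished, and it is unnecessary: the input you treat as a bottleneck is an already-available cited corollary whose proof is formal in precisely the sense you ruled out.
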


\begin{proof}
  Let \(A = D/\dvgen D\).  Then we compute
  \begin{multline*}
    \HLC\bigl(D \hot \coma{\dvr[t,t^{-1}]}\bigr)
    \cong \HAC\bigl(\bigl(D \hot \coma{\dvr[t,t^{-1}]}\bigr)'\bigr)
    \cong \HAC(A \otimes \resf[t,t^{-1}])
    \\ \cong \HAC(A) \oplus \HAC(A)[1]
    \cong \HAC(D') \oplus \HAC(D')[1]
    \cong \HLC(D) \oplus \HLC(D)[1],
  \end{multline*}
  where
  \(\HAC(A \otimes \resf[t,t^{-1}]) \cong \HAC(A) \oplus
  \HAC(A)[1]\) follows from
  \cite{Meyer-Mukherjee:HA}*{Corollary~8.5}.
\end{proof}

\section{Some computations of local cyclic homology}
\label{sec:computations}

In this section, we compute local cyclic homology for some Banach \(\dvr\)\nb-algebras.

\begin{example}
  \label{ex:Leavitt}
  Let~\(E\) be a directed graph.  Let \(L(\resf, E)\) and
  \(C(\resf, E)\) be its Leavitt and Cohn path algebras
  over~\(\resf\), respectively.  Their lifts \(L(\dvr, E)\) and
  \(C(\dvr, E)\) are torsionfree \(\dvr\)\nb-algebras, which we
  equip with the fine bornology.  Their \(\dvgen\)\nb-adic
  completions \(\coma{L(\dvr, E)}\) and \(\coma{C(\dvr,E)}\) are
  Banach \(\dvr\)\nb-algebras.  The analytic cyclic homology of the
  dagger completions \(L(\dvr, E)^\updagger\) and
  \(C(\dvr, E)^\updagger\) is computed in
  \cite{Cortinas-Meyer-Mukherjee:NAHA}*{Theorem~8.1}.  With the
  compactoid bornology, these dagger completions are still dagger
  algebras by Lemma~\ref{lem:compactoid_dagger}, and still fine
  mod~\(\dvgen\).  
  Corollary~\ref{cor:Monsky-Washnitzer-compactoid} shows directly that
  \(L(\dvr, E)^\updagger\) is nuclear.

  Theorem~\ref{the:dependence_mod_p} implies that the local cyclic
  homology of \(\coma{L(\dvr, E)}\) and \(\coma{C(\dvr,E)}\) is
  naturally isomorphic to the analytic cyclic homology computed
  in~\cite{Cortinas-Meyer-Mukherjee:NAHA}, namely,
  \begin{align*}
    \HLC\bigl(\coma{L(\dvr, E)}\bigr)
    &\cong \HAC(L(\resf, E))
    \cong \coker(N_E) \oplus \ker(N_E)[1],\\
    \HLC\bigl(\coma{C(\dvr, E)}\bigr)
    &\cong \HAC(C(\resf, E))
      \cong \dvf^{(E^0)}.
  \end{align*}
\end{example}

\begin{example}
  \label{ex:Laurent_polynomials}
  Consider the \(\dvr\)\nb-algebra of Laurent polynomials in
  \(n\)~variables
  \(L_n(\dvr) = \dvr[t_1, t_1^{-1}, \dots, t_n, t_n^{-1}]\).  We may
  write this as \(L_{n-1}(\dvr) \otimes \dvr[t_n, t_n^{-1}]\).
  Taking \(\dvgen\)\nb-adic completions gives
  \[
    \coma{L_n(\dvr)} \cong L_{n-1}(\dvr) \haotimes \coma{\dvr[t,t^{-1}]}.
  \]
  Then Theorem~\ref{the:Bass_fundamental} implies
  \[
    \HLC\Bigl(\coma{L_n(\dvr)}\Bigr)
    \cong \HLC\Bigl(\coma{L_{n-1}(\dvr)}\Bigr) \oplus  \HLC\Bigl(\coma{L_{n-1}(\dvr)}\Bigr)[1]
    \cong \HLC\Bigl(\coma{L_{n-1}(\dvr)}\Bigr) \otimes (\dvf \oplus \dvf[1]).
  \]
  Iterating this and using
  \(\HLC(\coma{L_0(\resf)}) = \HAC(\dvr) = \dvf\), we identify
  \(\HLC(\coma{L_n(\dvr)})\) with the \(\dvf\)\nb-vector space
  \(\Lambda^* (\dvf^n)\), the exterior algebra, with the usual
  \(\Z/2\)-grading and the zero boundary map.  This is also
  isomorphic to Berthelot's rigid cohomology of \(\resf[t,t^{-1}]\).
  A closer inspection shows that the canonical chain maps
  \[
    \HLC(\coma{L_n(\dvr)}) \leftarrow \HLC(L_n(\dvr)^\updagger)
    \to \mathbb{HP}(L_n(\dvr)^\updagger \otimes \dvf)
  \]
  are quasi-isomorphisms; here \(\mathbb{HP}\) denotes the chain
  complex that computes periodic cyclic homology
  (see~\cite{Cortinas-Cuntz-Meyer-Tamme:Nonarchimedean} for its
  definition in the current setting).
\end{example}

\begin{example}
  \label{ex:smooth_curves}
  Let~\(X\) be a smooth affine variety over~\(\resf\) and let
  \(A = \mathcal{O}(X)\) be its coordinate ring.
  Elkik~\cite{Elkik:Solutions} shows that there is a smooth
  \(\dvr\)\nb-algebra~\(R\) with \(R/\dvgen R \cong A\).  Its
  \(\dvgen\)\nb-adic completion~\(\coma{R}\) still satisfies
  \(\coma{R}/\dvgen \coma{R} \cong A\).  We call~\(\coma{R}\) the
  \emph{Tate algebra of~\(X\)}.  This is a Banach
  \(\dvr\)\nb-algebra.  Theorem~\ref{thm:pro-dagger_lifting} implies
  \[
    \HLC(\coma{R})
    \defeq \HAC(\coma{R}')
    \cong \HAC(A).
  \]
  In the situation above, we also know that
  \(\HLC(\coma{R}) = \HLC(R^\updagger) = \HAC(R^\updagger) \cong
  \HAC(A)\).  If~\(X\) has dimension~\(1\), then \(\HA_*(A)\) is the
  rigid cohomology of~\(X\) with coefficients in~\(\dvf\) by
  \cite{Meyer-Mukherjee:HA}*{Corollary~5.6}.
\end{example}

\begin{theorem}
  \label{thm:HL_invariance}
  Let~\(B\) be a Banach \(\dvr\)\nb-algebra and let~\(R\) be a
  \(\dvr\)\nb-subalgebra of~\(B\) such that
  \(R/\dvgen R \cong B/\dvgen B\).  Equip~\(R\) with the fine
  bornology.  Then
  \[
    \HAC(R^\updagger) = \HLC(R^\updagger) \cong \HLC(B).
  \]
  In addition, there is a canonical chain map
  \(\HAC(R^\updagger) \to \mathbb{HP}(R^\updagger \otimes \dvf)\).
\end{theorem}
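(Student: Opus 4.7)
The plan is to treat the three assertions of the theorem separately. For the first equality $\HAC(R^\updagger) = \HLC(R^\updagger)$, I would note that since $R$ carries the fine bornology and is torsionfree, Corollary \ref{cor:Monsky-Washnitzer-compactoid} tells us that $R^\updagger$ is nuclear. By definition of nuclearity, the canonical map $(R^\updagger)' \to R^\updagger$ is a bornological isomorphism, whence $\HLC(R^\updagger) \defeq \HAC((R^\updagger)') = \HAC(R^\updagger)$ with no further work.

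Next, to prove $\HLC(R^\updagger) \cong \HLC(B)$, I would apply Theorem~\ref{the:dependence_mod_p} with $D_1 = R^\updagger$ and $D_2 = B$. Both are dagger algebras: the former by construction, the latter by Example~\ref{exa:Banach_algebra}. It remains to verify $R^\updagger/\dvgen R^\updagger \cong B/\dvgen B$, and since $R/\dvgen R \cong B/\dvgen B$ by hypothesis, this reduces to showing $R^\updagger/\dvgen R^\updagger \cong R/\dvgen R$. The required identifications
\[
  R^\updagger/\dvgen R^\updagger
  \cong \ling{R}/\dvgen \ling{R}
  \cong R/\dvgen R
\]
are exactly those carried out in the proof of Corollary~\ref{cor:HL_invariant_dagger}, resting on the fact that $R/\dvgen R$ with its fine $\resf$-algebra structure is already bornologically complete as a $\dvr$-algebra (by \cite{Cortinas-Meyer-Mukherjee:NAHA}*{Proposition~2.3.3}). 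Theorem~\ref{the:dependence_mod_p} then yields the claimed quasi-isomorphism. Compatibility with the natural inclusion $R \hookrightarrow B$ (bounded because $R$ carries the fine bornology, and every subset of $B$ is bounded) makes the quasi-isomorphism canonical, since $R \to B$ extends by universality of the dagger completion to a bounded homomorphism $R^\updagger \to B$ compatible with the isomorphism on reductions mod $\dvgen$.

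For the final assertion, I would invoke the natural transformation from the analytic cyclic complex of a dagger algebra to the periodic cyclic complex of the associated $\dvf$\nobreakdash-algebra constructed in \cite{Cortinas-Cuntz-Meyer-Tamme:Nonarchimedean}; this is precisely the chain map already used implicitly in Example~\ref{ex:Laurent_polynomials}. Overall, I do not expect a genuine obstacle: once Corollary~\ref{cor:Monsky-Washnitzer-compactoid} and Theorem~\ref{the:dependence_mod_p} are available, the statement is essentially a recombination of them, and the only delicate bookkeeping---the identification $R^\updagger/\dvgen R^\updagger \cong R/\dvgen R$---has already been performed in Corollary~\ref{cor:HL_invariant_dagger}. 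The conceptual content of the theorem is that the dagger completion of an algebraic lift $R$ sees precisely the same analytic cyclic homology as \emph{any} Banach enlargement of $R$ inside which $R$ is mod\nobreakdash-$\dvgen$ dense.
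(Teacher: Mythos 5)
Your proposal is correct and follows essentially the same route as the paper: nuclearity of \(R^\updagger\) via Corollary~\ref{cor:Monsky-Washnitzer-compactoid} gives the first equality, Theorem~\ref{the:dependence_mod_p} (together with the identification \(R^\updagger/\dvgen R^\updagger \cong R/\dvgen R\) established as in Corollary~\ref{cor:HL_invariant_dagger}) gives the quasi-isomorphism with \(\HLC(B)\), and the chain map to periodic cyclic homology is the canonical one from~\cite{Cortinas-Cuntz-Meyer-Tamme:Nonarchimedean}. You spell out the verification of the mod-\(\dvgen\) hypothesis of Theorem~\ref{the:dependence_mod_p} more explicitly than the paper does, which is a welcome addition rather than a deviation.
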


\begin{proof}
  By Lemma~\ref{lem:compactoid_dagger}, both \(B'\)
  and~\((R^\updagger)'\) are dagger algebras.  Since they are also
  fine mod~\(\dvgen\), Theorem~\ref{the:dependence_mod_p} gives a
  quasi-isomorphism \(\HLC(R^\updagger) \cong \HLC(B)\).  The
  periodic cyclic homology complex
  \(\mathbb{HP}(R^\updagger \otimes \dvf)\) for bornological
  \(\dvf\)\nb-algebras like $R^\updagger \otimes \dvf$ is defined
  in~\cite{Cortinas-Cuntz-Meyer-Tamme:Nonarchimedean}.  There is a
  canonical chain map
  \(\HAC(R^\updagger) \to \mathbb{HP}(R^\updagger \otimes \dvf)\)
  because the chain complex \(\HAC(R^\updagger)\) is defined as a
  subcomplex of \(\mathbb{HP}(R^\updagger \otimes \dvf)\).
\end{proof}

Let us return to the situation of Example~\ref{ex:smooth_curves}.
The periodic cyclic homology \(\HP_*(R^\updagger \otimes \dvf)\) is
identified in~\cite{Cortinas-Cuntz-Meyer-Tamme:Nonarchimedean} with
the rigid cohomology of the underlying affine variety
over~\(\resf\).  The computation of \(\HAC(A)\) in
\cite{Meyer-Mukherjee:HA}*{Corollary~5.6} shows that the canonical
chain map \(\HAC(A^\updagger) \to \mathbb{HP}(A^\updagger \otimes \dvf)\) in
Theorem~\ref{thm:HL_invariance} is a quasi-isomorphism for
\(1\)\nb-dimensional varieties.
Example~\ref{ex:Laurent_polynomials} shows that the same happens for
some varieties of higher dimension.  It is unclear, however, how to
generalise these results to other smooth commutative
\(\resf\)\nb-algebras.

One could hope that the canonical chain map
\(\HAC(A^\updagger) \to \mathbb{HP}(A^\updagger \otimes \dvf)\) is an isomorphism
when the algebra~\(A^\updagger\) is ``smooth'' in the sense
that~\(A^\updagger\) has a resolution by projective
\(A^\updagger\)\nb-bimodules of finite length.  The counterexample
in the following proposition shows that this cannot work without
extra assumptions.

\begin{proposition}
  Let \(D \defeq \dvr[t]\) be the coordinate ring of the affine
  plane.  Then~\(\coma{D}'\), the \(\dvgen\)\nb-adic completion
  of~\(D\) equipped with the compactoid bornology, is quasi-free,
  but \(\HA_*(\coma{D}') \not\cong \HP_*(\coma{D}' \otimes \dvf)\).
\end{proposition}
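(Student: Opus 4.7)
The plan is to establish the two claims separately. Set $B \defeq \coma{D}'$ with $D = \dvr[t]$. Note that the interesting case is when $\resf$ has positive characteristic $p$; otherwise every power series can be integrated termwise in the Tate algebra and the assertion would fail. For the analytic side, Lemma~\ref{lem:compactoid_dagger} ensures that $B$ is a dagger algebra, and it is fine mod~$\dvgen$ by construction, so Theorem~\ref{the:dependence_mod_p} gives $\HAC(B) \cong \HAC(\resf[t])$. The maps $\resf \hookrightarrow \resf[t]$ and $t \mapsto 0$ are mutually inverse up to the polynomial homotopy $t \mapsto ts$, so by \cite{Meyer-Mukherjee:HA}*{Theorem~8.1}, $\HAC(\resf[t]) \cong \HAC(\resf)$, yielding $\HA_*(B) \cong \dvf$ concentrated in degree~$0$; in particular $\HA_1(B) = 0$.

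For quasi-freeness, I would verify that the Koszul-type bornological bimodule sequence
\[
  0 \longrightarrow B \hot B \xrightarrow{\;\cdot\omega\;} B \hot B \xrightarrow{\;\mu\;} B \longrightarrow 0,
\]
with $\omega = t\otimes 1 - 1\otimes t$ and $\mu$ the multiplication, is a short exact resolution by a free $B$-bimodule. The right map admits the $\dvr$-linear section $b \mapsto b\otimes 1$, so the content is that $\cdot\omega$ is a strict bornological embedding whose image is the bornological kernel of $\mu$. By Proposition~\ref{prop:compactoid_tensor-product}, $B \hot B$ is the compactoid bornology on $\coma{\dvr[s,t]}$. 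On the dense subalgebra $\dvr[s,t]$, the ideal $(s-t)$ is free of rank one since $\dvr[s,t]$ is a UFD; I would transfer this exactness and freeness to $B \hot B$ using the null-sequence characterisation of compactoid submodules (Proposition~\ref{prop:equivalent_compactoid}) and the Noetherianness of~$\dvr$, in the spirit of Proposition~\ref{prop:exactness}. This identifies $\Omega^1 B$ with the free $B$-bimodule $B \hot B$, proving that $B$ is quasi-free.

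With quasi-freeness in hand, the Cuntz--Quillen framework of \cite{Cortinas-Cuntz-Meyer-Tamme:Nonarchimedean} identifies $\mathbb{HP}(B \otimes \dvf)$ with the two-term bornological de Rham complex
\[
  \coma{\dvr[t]} \otimes \dvf \xrightarrow{\;d\;} \coma{\dvr[t]} \otimes \dvf \cdot dt, \qquad d\Bigl(\sum c_n t^n\Bigr) = \sum n c_n t^{n-1}\, dt.
\]
I would exhibit an explicit non-exact $1$-form: let $\omega_0 \defeq \sum_{k \ge 1} p^k\, t^{p^k - 1}\, dt$, where $p$ is the residue characteristic. Its coefficients tend to zero $\dvgen$-adically because $|p|_\dvf < 1$, so $\omega_0$ lies in the complex. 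Any formal antiderivative, however, would have $t^{p^k}$-coefficient $p^k/p^k = 1$, violating the decay condition defining $\coma{\dvr[t]} \otimes \dvf$. Hence $\HP_1(B \otimes \dvf) \ne 0$, while $\HA_1(B) = 0$ from the first paragraph, so the two homologies cannot be isomorphic.

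The main obstacle is the quasi-freeness step. Algebraic Koszul exactness of $0 \to \dvr[s,t] \to \dvr[s,t] \to \dvr[t] \to 0$ via multiplication by $s-t$ is classical, but transferring it to the $\dvgen$-adic completion equipped with the compactoid bornology --- in particular, confirming that multiplication by $s-t$ remains a strict bornological embedding with bornologically closed image --- calls for careful bookkeeping with the null-sequence description of compactoid submodules, along the lines of Propositions~\ref{prop:exactness} and~\ref{prop:compactoid_tensor-product}.
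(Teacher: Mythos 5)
Your proposal follows the same broad architecture as the paper's proof: establish quasi-freeness by identifying $\Omega^1(\coma{D}')$ with the free bimodule $\coma{D}'\hot\coma{D}'$, compute $\HA_*$ via the dependence on the reduction mod~$\dvgen$ as $\HAC(\resf[t])$, and compare against the $X$\nb-complex of $\coma{D}'\otimes\dvf$. Two points of divergence are worth noting. First, your explicit non-exact form $\omega_0 = \sum_{k\ge 1}p^k t^{p^k-1}\,dt$ is a welcome concrete witness for $\HP_1\ne 0$; the paper instead appeals to the known discrepancy between the algebraic de~Rham cohomology of the Tate algebra and rigid cohomology, citing Monsky--Washnitzer, so your version is more self-contained. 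Second, for the quasi-freeness step you flag the transfer of exactness to $\coma{\dvr[s,t]}'$ as the main obstacle and propose redoing null-sequence bookkeeping ``in the spirit of Proposition~\ref{prop:exactness}''; this is unnecessary. Split the passage in two stages: first take $\dvgen$\nb-adic completions of the purely algebraic extension $\Omega^1(D)\into D\otimes D\onto D$. Because the cokernel $D=\dvr[t]$ is torsionfree, $\operatorname{Tor}_1^\dvr(\dvr/\dvgen^nV, D)=0$, so reduction mod $\dvgen^n$ stays exact, and the inverse limit remains exact since the transition maps of the kernel system are surjective; this yields the extension $\coma{\Omega^1(D)}\into\coma{D\otimes D}\onto\coma{D}$ of Banach $\dvr$\nb-modules. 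Now Proposition~\ref{prop:exactness} applies \emph{verbatim} to produce the primed extension, and Proposition~\ref{prop:compactoid_tensor-product} gives $\coma{D\otimes D}'\cong\coma{D}'\hot\coma{D}'$, so that $\Omega^1(\coma{D}')\cong\coma{D}'\hot\coma{D}'$ is free. This closes the gap you acknowledged without any ad~hoc argument.
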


\begin{proof}
  Define \(\Omega^1(D)\) as
  in~\cite{Cuntz-Quillen:Algebra_extensions} and let
  \(\Omega^1(\coma{D}') = \ker\bigl(\coma{D}' \hot \coma{D}' \to
  \coma{D}'\bigr)\).  It is easy to see that the map
  \[
    D \otimes D \to \Omega^1(D), \quad f_1 \otimes f_2 \mapsto f_1
    \cdot \diff t \cdot f_2
  \]
  is an isomorphism of \(D\)\nb-bimodules.  Since~\(D\) is
  torsionfree, the \(\dvgen\)\nb-adic completions form an extension
  \(\coma{\Omega^1(D)} \into \coma{D\otimes D} \onto \coma{D}\).
  Therefore,
  \[
    \Omega^1\bigl(\coma{D}\bigr)
    \cong \coma{\Omega^1(D)}
    \cong \coma{D \otimes D}
    \cong \coma{D} \haotimes \coma{D}.
  \]
  Then
  \(\coma{\Omega^1(D)}' \into \coma{D\otimes D}' \onto \coma{D}'\)
  is an extension by Proposition~\ref{prop:exactness}.
  Proposition~\ref{prop:compactoid_tensor-product} shows that
  \(\coma{D\otimes D}' \cong \coma{D}' \hot \coma{D}'\).  This
  implies an isomorphism
  \[
    \Omega^1(\coma{D}') \cong \coma{D}' \hot \coma{D}'
  \]
  of \(\coma{D}'\)\nb-bimodules.  So \(\Omega^1(\coma{D}')\) is a
  projective \(\coma{D}'\)\nb-bimodule.  Then~\(\coma{D}'\) is
  quasi-free.

  Now let \(\underline{\coma{D}} \defeq \coma{D}' \otimes \dvf\).
  This is again quasi-free.  Since~\(\dvf\) has characteristic zero,
  \(\mathbb{HP}(\underline{\coma{D}}) \cong
  X(\underline{\coma{D}})\).  The homology of this chain complex is
  the algebraic de Rham cohomology of \(\underline{\coma{D}}\).  It
  is well known that this differs from the Monsky--Washnitzer or
  rigid cohomology of the plane
  (see~\cite{Monsky-Washnitzer:Formal}).  At the same time,
  \[
    \HAC\bigl(\coma{D}'\bigr)
    = \HLC\bigl(\coma{D}\bigr)
    \cong \HLC(D^\updagger)
    = \HAC(D^\updagger)
    \cong \HAC(\resf[t])
  \]
  by Theorem~\ref{the:dependence_mod_p}.  This agrees with the rigid
  cohomology of the affine plane.  As a result,
  \(\HA_*(\coma{D}') \not\cong \HP_*(\coma{D}' \otimes \dvf)\).
\end{proof}

\begin{bibdiv}
  \begin{biblist}
    \bibselect{references}
  \end{biblist}
\end{bibdiv}

\end{document}